\documentclass[11pt]{amsart}
\usepackage[T1]{fontenc}
\usepackage{lmodern}
\usepackage{amsmath,amssymb,amsthm,mathtools}
\usepackage{enumitem,microtype,needspace}
\usepackage[margin=1.1in]{geometry}
\usepackage[colorlinks=true,linkcolor=blue,citecolor=blue,urlcolor=blue]{hyperref}

\newtheorem{theorem}{Theorem}[section]
\newtheorem{proposition}[theorem]{Proposition}
\newtheorem{lemma}[theorem]{Lemma}
\newtheorem{corollary}[theorem]{Corollary}
\theoremstyle{definition}

\newtheorem{question}[theorem]{Question}
\theoremstyle{remark}
\newtheorem{remark}[theorem]{Remark}
\numberwithin{equation}{section}

\newcommand{\Z}{\mathbb Z}
\newcommand{\Q}{\mathbb Q}

\newcommand{\cP}{\mathcal P}

\DeclareMathOperator{\Arf}{Arf}

\newcommand{\shn}{\operatorname{sh}}

\newcommand{\gfour}{g_4^{\mathrm{top}}}
\newcommand{\galg}{g_{\mathrm{alg}}}
\newcommand{\gsh}{g_{\mathrm{sh}}}

\title{Topological Shaking Numbers and Cabling}

\author[W.\ Shen]{Weizhe Shen}
\address {Department of Mathematics, University of California, Davis}
\email{wzshen@ucdavis.edu}

\begin{document}

\begin{abstract}
    We give a Seifert-form criterion for upper bounds on topological shaking numbers and prove that isotropic certificates lift under $(m,1)$-cabling.
\end{abstract}

\maketitle

\section{Introduction}\label{sec:introduction}
Let $X_n(K)$ be the $4$-manifold obtained by attaching a $2$-handle to $B^4$ along a knot $K\subset S^3=\partial B^4$ with framing $n\in \Z$ relative to the Seifert framing, and let $C$ be the cocore of the handle. If $K$ is topologically slice, capping a locally flat slice disk for $K$ with the handle core gives a locally flat embedded sphere generating $H_2(X_n(K);\Z)$ and meeting $C$ transversely once. More generally, $K$ is \textit{topologically $n$-shake slice} if $X_n(K)$ contains a locally flat embedded generating sphere, which may meet $C$ more than once. The \textit{topological $n$-shaking number} $\shn_n(K)$ is the minimum of $|S\cap C|$ over all locally flat generating spheres $S\subset X_n(K)$ transverse to $C$, with $\shn_n(K)=\infty$ when no such sphere exists. For each $n\in\Z$, we have $\shn_n(K)=1$ if and only if $K$ is topologically slice.

Feller-Miller-Nagel-Orson-Powell-Ray \cite{FMNOPR} developed a Seifert-surface construction for $1$-shakings. For every knot $K$ with $\Arf(K)=0$, they proved \cite[Proposition 8.8]{FMNOPR} that
\begin{equation}\label{eq:known-bounds}
    2\gfour(K)+1 \le \shn_1(K) \le 2\galg(K)+1,
\end{equation}
where $\gfour$ denotes the topological slice genus and $\galg$ the algebraic genus \cite{FL18,FL19}. The same proposition realizes every positive odd value of $\shn_1$. More generally, for any knot $K$ and integer $h$ with $0<h\le \galg(K)$, their construction gives a locally flat generating surface of genus $h$ in $X_1(K)$ with simply connected complement and exactly $2(\galg(K)-h)+1$ cocore intersections \cite[Corollary 8.5]{FMNOPR}. This also holds for $h=0$ when $\Arf(K)=0$. In \cite[\S8.1]{FMNOPR}, they suggest extending the Seifert-surface approach to framings $|n|\ge 2$ while retaining control of the cocore intersection number.

This paper provides such control for an explicit class of cables. We construct Alexander-trivial subgroups of Seifert forms of shakings and prove a determinant identity that lifts isotropic certificates under $(m,1)$-cabling. Combining this construction with lower bounds, we determine exact shaking numbers and full genus-intersection profiles for explicit families.

All surfaces are connected and oriented, and they are closed unless boundary is specified.
For any knot $K$ and integer $n$, a pair $(g,r)\in\Z_{\ge 0}^2$ is \textit{realized} if a locally flat embedded genus-$g$ surface $S\subset X_n(K)$ represents a generator of $H_2(X_n(K);\Z)$ and meets $C$ transversely in exactly $2r+1$ points. Define the \textit{genus-intersection profile} by
\[
    \cP_n(K)=\{(g,r)\in\Z_{\ge 0}^2 \mid (g,r)\text{ is realized}\}.
\]
Generating surfaces are understood to be locally flat and transverse to the cocore, with no restriction on the fundamental group of their complements. Their geometric intersection number with $C$ is necessarily odd, since their algebraic intersection number is $\pm 1$.

Throughout, we normalize the Alexander polynomial $\Delta_K(t)\in\Z[t,t^{-1}]$ by $\Delta_K(t)=\Delta_K(t^{-1})$ and $\Delta_K(1)=1$. Thus, for any $2g\times2g$ Seifert matrix $V$ for $K$,
\[
    \det(tV-V^T) = t^g\Delta_K(t).
\]
We write $\overline{K}$ for the mirror of a knot $K$.

\begin{theorem}\label{realization}
    Let $n\in\Z\setminus\{0\}$ and $k\in\Z_{\ge 0}$. Define
    \[
        K_{n,k}=
        \begin{cases}
            C_{n,1}(\#^k5_2),&n>0,\\
            \overline{C_{|n|,1}(\#^k5_2)},&n<0.
        \end{cases}
    \]
    Then
    \[
        \begin{gathered}
            \gfour(K_{n,k})=k,\qquad\shn_n(K_{n,k})=2k+1,\\
            \cP_n(K_{n,k})=\{(g,r)\in\Z_{\ge 0}^2 \mid g+r\ge k\}.
        \end{gathered}
    \]
    Moreover, the shaking number is attained by a locally flat generating sphere $S\subset X_n(K_{n,k})$ with $\pi_1(X_n(K_{n,k})\setminus S)\cong \Z/|n|\Z$.
\end{theorem}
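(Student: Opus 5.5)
The argument splits into a lower bound and an upper bound. First, reduce to $n>0$. Mirroring gives an orientation-reversing diffeomorphism $X_n(K)\cong -X_{-n}(\overline K)$ that carries cocore to cocore and generating surfaces to generating surfaces. Hence $\cP_{-n}(\overline K)=\cP_n(K)$ and $\gfour$ is unchanged, so it suffices to treat $K=K_{n,k}=C_{n,1}(J)$ with $J=\#^k5_2$ and $n>0$.

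\emph{Lower bound.} Let $S$ be a genus-$g$ generating surface meeting $C$ in $2r+1$ points. Near each pair of extra cocore intersections we tube $S$ to itself along an arc in $C$, which removes the two points and adds a handle. Doing this $r$ times gives a genus-$(g+r)$ generating surface meeting $C$ once. Removing a neighbourhood of the core then yields a locally flat surface in $B^4$ bounded by a parallel copy of the knot. Since the framing is $n$ and the cable is $(n,1)$, this copy is exactly $C_{n,1}$-type; more precisely, the standard argument shows a surface meeting $C$ once gives a slice surface of genus $g+r$ for $K$. Therefore $g+r\ge \gfour(K)$.

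It remains to show $\gfour(K)\ge k$. For this I would use the Levine--Tristram signature of the cable. The formula $\sigma_\omega(C_{n,1}(J))=\sigma_{\omega^n}(J)$ holds for $(n,1)$-cables, since the pattern is unknotted. Each $5_2$ has signature $-2$ at $\omega=-1$ and on a large arc of the circle, so for $\omega$ chosen with $\omega^n$ near $-1$ we get $|\sigma_\omega(K)|=2k$. Using $\omega$ away from roots of $\Delta_K$, this gives $\gfour\ge k$. The same signature argument applied directly to a shaking surface should give $g+r\ge k$, bypassing the tubing argument. This would use the shaking-surface signature bound implicit in \eqref{eq:known-bounds}, which in this setting gives $2(g+r)+1\ge|\sigma|+1$.

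\emph{Upper bound and realization.} I need to realize $(0,k)$ and $(k,0)$; all other pairs with $g+r\ge k$ then follow by adding trivial handles to $S$ or local pairs of cocore intersections (finger moves across $C$). The pair $(k,0)$ comes from capping a genus-$k$ slice surface. Here $5_2$ has topological slice genus $1$ because its Alexander polynomial is nontrivial and it bounds a genus-one surface, and boundary connected sum handles the $k$ summands.

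The real work is $(0,k)$, meaning $\shn_n(K)\le 2k+1$ via a sphere with $\pi_1$ of the complement equal to $\Z/n$. Here I would invoke the paper's Seifert-form criterion. Take a Seifert surface for $J$ of genus $k$, with Seifert matrix $\bigoplus V_{5_2}$. The $n$ parallel copies banded together give a Seifert surface for the shaking of $K$ in $X_n$. In its Seifert form one locates a rank-half subgroup whose associated sub-Alexander polynomial is trivial, i.e.\ an isotropic certificate. The cabling determinant identity then says the $(n,1)$-cable of a certificate for $J$ with $2k+1$ points lifts to a certificate for $K$. The Freedman-type result underlying the criterion converts this certificate into a locally flat sphere with $\Z/n$ complement.

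This lifting step, and in particular checking that the determinant identity yields $\det=t^{*}$ (Alexander-trivial) for the cabled form, is the step I expect to be the main obstacle. I would verify it by writing the cabled Seifert matrix in block form and computing $\det(tV-V^T)$ restricted to the lifted subgroup explicitly via Schur complements.
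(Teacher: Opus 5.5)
Your plan mostly follows the paper. You reduce to $n>0$ by reflection, obtain $(0,k)$ from a framing-one certificate for $\#^k5_2$ lifted by Theorem~\ref{lifting} (with $m=n$, $n'=1$) and fed into Corollary~\ref{geometric-certificate}, and get the lower bound from Litherland's formula and the topological Murasugi--Tristram inequality. In that lower bound, a generic $\omega$ with $\omega^n$ near $-1$ works as well as the paper's prime-power root $\xi$ with $\xi^n=-1$, because knot signatures are locally constant off the roots of $\Delta_K$. The realization half, however, has two genuine errors.

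\textbf{The point $(k,0)$.} You justify $(k,0)$, and hence $\gfour(K)\le k$, by a genus-$k$ surface bounded by the companion $J=\#^k5_2$. That concerns $J$, not $K=C_{n,1}(J)$. Surfaces for $K$ built from $n$ parallel copies have genus $nk$, and no inequality $\gfour(C_{n,1}(J))\le\gfour(J)$ is available to you; its smooth analogue even fails, since Hom's cabling formula gives $\tau(C_{2,1}(T_{2,3}))=2$. Incidentally, a nontrivial Alexander polynomial does not obstruct sliceness; $\sigma_{5_2}(-1)=-2$ does.

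\textbf{The intermediate pairs.} Stabilizations and finger moves applied to $(0,k)$ and $(k,0)$ only reach pairs with $r\ge k$ or $g\ge k$. For $k\ge2$ they therefore miss $(k-r,r)$ with $0<r<k$, for example $(1,1)$ when $k=2$.

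Both defects are repaired by the tubing move $(g,r)\mapsto(g+1,r-1)$ of Proposition~\ref{profile-structure}(a), which you already use in your lower bound. Applying it $k-r$ times to the $(0,k)$ sphere realizes $(k-r,r)$, and the case $r=0$ gives $\gfour(K)\le k$. This is exactly how the paper argues: the upper half of $\gfour(K_{n,k})=k$ is an output of the shaking sphere, not an input.

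In the $(0,k)$ step you also take the framing-one certificate for granted and misdescribe isotropy. In the paper, isotropy is the condition $v_b^T\omega^{-1}v_a=0$, a hypothesis of Theorem~\ref{lifting} that must be checked; it is not Alexander-triviality of a half-rank subgroup. The needed input is Lemma~\ref{five-two}. There $u=v=(0,1)^T$ for $V_0$ gives $\det E(t)=t$, and isotropy is automatic for a single $v$ because $\omega^{-1}$ is skew-symmetric. The $k$-fold direct sum then has determinant $t^k$ and block-diagonal $\omega^{-1}$. Alternatively, apply Corollary~\ref{arf-cables}, using $\Arf(5_2)=0$ and $\galg(\#^k5_2)\le k$. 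Once this is supplied, the determinant identity you flag as the main obstacle is precisely Theorem~\ref{lifting} and needs no new Schur-complement computation. Note also that banding $n$ parallel copies of a Seifert surface for $J$ gives a Seifert surface for the cable, with matrix $\operatorname{cab}_n(V)$, not for its shaking. The shaking $S_{2k+1,n}(K)$ has Seifert matrix $\operatorname{cab}_n(V)\oplus M_n^{\oplus k}$.
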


Here $C_{m,1}$ denotes the $(m,1)$-cable, with longitudinal winding number $m$, and $\#^0 5_2$ denotes the unknot. The handedness of $5_2$ is fixed in Section \ref{sec:cables}. For $n=1$, the shaking-number formula recovers the calculation for $\#^k5_2$ in \cite[Proposition 8.8]{FMNOPR}. The existence of generating spheres with cyclic complement for these knots already follows from \cite[Corollary 2.1]{FMNOPR} and reflection. Theorem \ref{realization} determines the least cocore intersection number and the full genus-intersection profile of $K_{n,k}$ for every $n\neq 0$.

The upper bound is part of a more general statement.

\begin{theorem}\label{main-cables}
    Let $J$ be a knot with $\Arf(J)=0$ and let $n\in\Z_{\ge 1}$. Then
    \[
        \shn_n(C_{n,1}(J)) \le 2\galg(J)+1 \le 2g_3(J)+1,
    \]
    where $g_3$ denotes the Seifert genus. Moreover, there is a locally flat generating sphere $S\subset X_n(C_{n,1}(J))$ meeting the cocore in exactly $2\galg(J)+1$ points and satisfying $\pi_1(X_n(C_{n,1}(J))\setminus S)\cong \Z/n\Z$. The corresponding statements hold for $\overline{C_{n,1}(J)}$ at framing $-n$.
\end{theorem}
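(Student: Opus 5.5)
We will prove Theorem \ref{main-cables} in three stages: a Seifert-form criterion for $n$-shakings, a Seifert matrix for a shaking of the cable, and an algebraic certificate on that matrix.

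\textbf{Stage 1 (criterion).} Take a Seifert surface $F$ for $K=C_{n,1}(J)$. An $n$-shaking of $K$ is the link $L$ formed by $2r+1$ parallel framed copies of $K$ with orientations alternating so that $r+1$ copies point one way and $r$ the other. Tubing the parallel copies of $F$ together along bands gives a Seifert surface $F_L$ for a knot $K'$ obtained from the shaking by band sums. Any locally flat disk in $B^4$ for $K'$ can be capped with $2r+1$ parallel copies of the core. The result is a generating sphere meeting $C$ in $2r+1$ points. By Freedman–Quinn, in the form used in \cite{FMNOPR}, such a disk exists, with $\pi_1$ of the complement generated by a meridian, whenever $K'$ has Alexander polynomial one. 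More generally, we only need a subsurface of $F_L$ whose boundary is a knot with trivial Alexander polynomial. Surgering $F_L$ along a half-rank summand on which the Seifert form becomes unimodular of Alexander type gives such a subsurface. So the criterion reads: if the Seifert form $\theta_L$ on $H_1(F_L)$ contains a direct summand $H$ such that $F_L$ compresses to leave a surface whose form restricted to $H$ satisfies $\det(tA-A^T)=t^{\mathrm{rk}/2}$, then $\shn_n(K)\le 2r+1$. For the complement, after attaching the handle, the meridian relation is $\mu^n=1$, since the sphere has self-intersection $n$. This gives $\pi_1\cong\Z/n\Z$.

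\textbf{Stage 2 (cable model).} The $(n,1)$-cable of $J$ has a Seifert surface consisting of $n$ parallel copies of a Seifert surface $\Sigma$ for $J$, joined by $n-1$ half-twisted bands. Its Seifert matrix is block form: $V_J$ for one copy, plus zero blocks for the band curves. This is because the $(n,1)$ cable of the unknot is the unknot, so $\Delta_{C_{n,1}(J)}(t)=\Delta_J(t^n)$. With framing $n$ and a $(2r+1)$-fold shaking, the parallel copies of $C_{n,1}(J)$ sit inside a solid torus neighbourhood of $J$. The framing $n$ exactly cancels the cable winding, so the shaking is itself a satellite of $J$ with pattern a link in $S^1\times D^2$ whose winding numbers alternate. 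We will write its Seifert matrix as $\bigoplus V_J$ over $n(2r+1)$ copies, coupled by off-diagonal linking blocks.

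\textbf{Stage 3 (certificate and lifting).} Set $r=\galg(J)$. By definition of algebraic genus, $V_J$ admits a subspace of corank $2\galg(J)$ on which the form is Alexander-trivial. Equivalently, this is the $n=1$ certificate of \cite[Proposition 8.8]{FMNOPR}; it uses $\Arf(J)=0$ to handle the residual hyperbolic part. The key step is a determinant identity showing that the $n(2r+1)$ copies decompose: after a unimodular change of basis, $\theta_L$ becomes block upper-triangular, with one block the $n=1$ shaking form of $J$ and the remaining blocks having $\det(tA-A^T)$ a monomial. This is where we must show that the off-diagonal linking terms contribute only factors of the form $(t^{j}-1)$ that cancel, or evaluate to units.

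\textbf{Main obstacle.} We expect this identity, namely $\det(tA_n-A_n^T)=t^{N}\det(tA_1-A_1^T)$ for the lifted certificate, to be the hardest part. We would attempt it by induction on $n$, row-reducing the band blocks. The inequality $\galg\le g_3$ is standard. The mirror statement for framing $-n$ follows by reflecting $X_n$.
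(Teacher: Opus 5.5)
There is a genuine gap, and it sits at the step you flag as the main obstacle. Any identity that row reduction proves for general vector data must, with no vector pairs, reduce to
$\det\bigl(t\operatorname{cab}_n(V)-\operatorname{cab}_n(V)^T\bigr)=t^{ng}\Delta_J(t^n)$.
So it has to involve the substitution $t\mapsto t^n$. Your identity $\det(tA_n-A_n^T)=t^N\det(tA_1-A_1^T)$ has no substitution. It holds only a posteriori, once both sides are known to be monomials, so it cannot be what an induction on $n$ proves. For the same reason, a decomposition splitting off a framing-one form of $J$ in the same variable $t$ is not available for general data.

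The paper's identity is $\det\widetilde E(t)=\det E(t^n)$. Here $E(t)=tV-V^T+(t-1)\sum_l(u_lv_l^T+v_lu_l^T)$ is the framing-one matrix for $J$, and $\widetilde E$ is the framing-$n$ matrix for $\operatorname{cab}_n(V)$ with the explicit lifts $\widetilde u_l=(u_l,\dots,u_l)^T$ and $\widetilde v_l=(v_l,0,\dots,0)^T$. The proof computes $W^{-1}$ on these vectors, with $W=t\operatorname{cab}_n(V)-\operatorname{cab}_n(V)^T$, using a telescoping row identity, and then applies the matrix determinant lemma.

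You never specify lifted data. You also omit the hypothesis that makes the lift work, namely isotropy $v_b^T\omega^{-1}v_a=0$. Without it, $\widetilde v_b^TW^{-1}\widetilde v_a$ picks up the extra term $\bigl(1-t^{n-1}/[n]_t\bigr)v_b^T\omega^{-1}v_a$, where $[n]_t=1+t+\cdots+t^{n-1}$, and the computation no longer returns $\det E(t^n)$.

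Producing an isotropic framing-one certificate with $\galg(J)$ pairs is a separate lemma (Lemma \ref{arf-certificate}), not a restatement of \cite[Proposition 8.8]{FMNOPR}. The construction goes as follows.
\begin{enumerate}[label=(\roman*)]
\item Split off the Alexander-trivial summand $L$.
\item Use $\Arf(J)=0$ to choose an integral symplectic basis $a_i,b_i$ of $L^{\perp_\omega}$ with every $V(a_i,a_i)$ even.
\item Take $v_i=e_{a_i}$, so the $v_i$ span a Lagrangian and isotropy holds.
\item Choose $u_i$ to kill the $a$-rows of $V+\sum_i(u_iv_i^T+v_iu_i^T)$. Evenness of $V(a_i,a_i)$ is what makes the $u_i$ integral.
\end{enumerate}

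The supporting stages also need repair.
\begin{itemize}
\item The Seifert matrix of $C_{n,1}(J)$ is not ``$V_J$ plus zero blocks''; that is inconsistent with the formula $\Delta_J(t^n)$ you quote. The connecting bands add no homology. The matrix is the $n\times n$ block matrix $\operatorname{cab}_n(V)$, with $V$ on and above the diagonal and $V^T$ below.
\item Your $n(2r+1)$-copy satellite model for the shaking is never pinned down. By contrast, $\operatorname{cab}_n(V)\oplus M_n^{\oplus r}$ from \cite[Lemma 8.9]{FMNOPR} is explicit.
\item In Stage 1 the rank condition is missing or misstated (``half-rank''). The Alexander-trivial summand must be complementary to the rank-$2r$ radical of $\theta-\theta^T$, so its rank is twice the genus of the Seifert surface. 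Only then does the subsurface from \cite[Proposition 9]{FL18} carry all the genus and leave a planar remainder. As written, the zero summand satisfies your criterion.
\item You still need a way to evaluate the Alexander determinant of such a complement from the vector data. This is the paper's Theorem \ref{det-formula}: $\det(t\theta_A-\theta_A^T)=t^{r(n-1)}\det E_n(t)$, with $E_n(t)=tV-V^T+(t^n-1)\sum_l(u_lv_l^T+t^{1-n}v_lu_l^T)$, proved via a Schur complement over the $M_n$ blocks.
\end{itemize}
That reduction and the lifting identity are the substance of the theorem, and the proposal supplies neither.
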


The algebraic input is as follows. Let $V$ be a $2g\times 2g$ Seifert matrix for $K$, and let $n\ge 1$ and $r\ge 0$ be integers. For column vectors $u_1,\dots,u_r,v_1,\dots,v_r\in\Z^{2g}$, set
\begin{equation}\label{eq:intro-E}
    E_n(t) = 
    tV - V^T + (t^n-1)\sum_{l=1}^r\bigl(u_lv_l^T+t^{1-n}v_lu_l^T\bigr).
\end{equation}
Theorem \ref{det-formula} identifies $t^{r(n-1)}\det E_n(t)$ with the Alexander determinant of the Seifert form restricted to an explicit rank-$2(g+r(n-1))$ direct summand for the $(2r+1)$-component $n$-shaking of $K$. If $\det E_n(t)$ is a Laurent monomial with coefficient $\pm1$, then it necessarily equals $t^g$, and we call the vectors a \textit{certificate} for $(V,n)$. The Feller-Lewark construction \cite[Proposition 9 and the proof of Theorem 1]{FL18}, followed by capping with parallel handle-core disks, then gives a locally flat generating sphere in $X_n(K)$ with exactly $2r+1$ cocore intersections and complement fundamental group $\Z/n\Z$.

Theorem \ref{lifting} shows how to lift these certificates under cabling. For integers $m,n'\ge 1$, a certificate for $(V,n')$ satisfying $v_b^T(V-V^T)^{-1}v_a=0$ for all $a,b$ lifts to an isotropic certificate for the $(m,1)$-cable at framing $mn'$, with the same number of vector pairs and determinant $\det E_{n'}(t^m)$. For every knot $J$ with vanishing Arf invariant, we construct an explicit isotropic certificate at framing one for a suitable Seifert matrix of $J$, using $\galg(J)$ pairs. Applying the transfer with $m=n$ and $n'=1$, followed by the geometric criterion, gives Theorem \ref{main-cables}. Signature bounds, together with the genus-intersection inequality, provide the matching lower bounds for Theorem \ref{realization}. The existence of a certificate at framing $n\ge 1$ also forces $\Delta_K(t)\equiv1\pmod{t^n-1}$, as proved in Proposition \ref{certificate-restriction}.

For $|n|\ge 2$, increasing the number of cocore intersections need not allow a reduction in genus. The following theorem gives a signature obstruction and examples at every such framing.

\begin{theorem}\label{main-signature}
    Let $n\in\Z$ with $|n|\ge 2$, and let $K$ be a knot. If $(g,r)\in\cP_n(K)$ and $\xi\neq 1$ is a root of unity of prime-power order satisfying $\xi^{|n|}=1$, then
    \[
        2g\ge |\sigma_K(\xi)|,
    \]
    where $\sigma_K$ denotes the Levine-Tristram signature. In particular, for every $j\in\Z_{\ge 0}$,
    \[
        \cP_n(\#^j5_2)=\{(g,r)\in\Z_{\ge 0}^2 \mid g\ge j\}.
    \]
    If $n$ is even, then for every $j\in\Z_{\ge 0}$ one also has
    \[
        \cP_n(T_{2,2j+1})=\{(g,r)\in\Z_{\ge 0}^2 \mid g\ge j\}.
    \]
\end{theorem}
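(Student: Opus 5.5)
The signature bound comes from a branched-cover argument; the profile computations then combine it with explicit constructions.

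\emph{Step 1: the bound.} Let $S\subset X_n(K)$ be a genus-$g$ generating surface, so $[S]$ generates $H_2(X_n(K);\Z)\cong\Z$ and $[S]\cdot[S]=n$. Let $q=p^a$ be the order of $\xi$. Since $q$ divides $|n|$, the class $[S]$ is divisible by $q$ in $H_2(X_n(K),\partial X_n(K);\Z)\cong\Z/n$; indeed the image of $[S]$ there is $n$ times a generator. Hence the $q$-fold cyclic branched cover $\widetilde X\to X_n(K)$ branched along $S$ exists. This is where locally flat topology is needed, via a normal bundle or the standard Freedman--Quinn setup. Its boundary is a cyclic cover of $\partial X_n(K)=S^3_n(K)$ determined by the map $H_1(S^3_n(K))=\Z/n\to\Z/q$.

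\emph{Step 2: the $G$-signature computation.} Apply the $G$-signature theorem (the Rokhlin / Casson--Gordon / Viro style computation) to the $\xi$-eigenspace signature of $\widetilde X$. It gives
\[
\sigma_\xi(\widetilde X)=\sigma(X_n(K))-\frac{2\,\mathrm{(something)}\, n}{q^2}\ldots
\]
Equivalently, compare $\widetilde X$ with a model surface $S_0$ built from the core plus a pushed-in Seifert surface of $K$. Gluing the two along the common boundary cover makes the self-intersection correction terms cancel, because $[S]=[S_0]$. The difference of the eigenspace signatures is then the twisted signature of the closed $4$-manifold obtained by doubling. That closed signature equals the Levine--Tristram signature $\sigma_K(\xi)$, up to sign, via the Seifert-surface model, in the same way the Kauffman--Taylor / Viro bound is proved. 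Under the cyclic representation $\pi_1(X\setminus S)\to\Z/q$ we have $\dim H_2^\xi(\widetilde X)\le 2g$. This uses that $H_1(X\setminus S;\Q(\xi))$ is controlled by the prime-power hypothesis: $H_*(\widetilde X;\Z/p)$ behaves like that of a homology ball plus the $2g$ classes from the cover of $S$, since the $p$-group cover of the $\Z/p$-homology-trivial complement stays acyclic by Smith-theory counting. From this, $|\sigma_K(\xi)|\le 2g$. The number $r$ of cocore intersections never enters; this is the point of the theorem.

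\emph{Step 3: the profiles.} The containments $\supseteq$ are realized as follows. Take a genus-$j$ Seifert surface of $\#^j5_2$ or of $T_{2,2j+1}$, push it into $B^4$, cap with the core to get a pair $(j,0)$, and then add handles and tube in extra cocore pairs, which raises $g$ and $r$ freely. Adding a pair of intersections is done by a finger move of $S$ across $C$. For $\subseteq$, take $\xi$ of order $p$ dividing $|n|$.
\begin{itemize}[leftmargin=*]
\item For $5_2$, whose Seifert matrix is $\begin{pmatrix}2&1\\0&3\end{pmatrix}$ up to sign, we have $|\sigma_{5_2}(\omega)|=2$ for every $\omega\neq1$ on the circle. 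This holds because $\Delta=2t-3+2t^{-1}$ has no roots on the unit circle and the signature is $\pm2$. Additivity gives $|\sigma|=2j$.
\item For $T_{2,2j+1}$ with $n$ even, take $\xi=-1$. Then $|\sigma(-1)|=2j$.
\end{itemize}
In both cases $g\ge j$.

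The main obstacle is Step 2: the precise bookkeeping showing that the eigenspace correction terms from $[S]^2=n$ cancel, and the prime-power homology estimate $\dim H_2^\xi\le 2g$ independent of $r$.
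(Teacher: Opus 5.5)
Your proposal has genuine gaps. Step 2 carries the whole inequality, and as written it does not prove it. Step 3 also contains a false claim about $5_2$.

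\textbf{The signature identification in Step 2.} Write $\widetilde X_S$ and $\widetilde X_{S_0}$ for the $q$-fold covers branched along $S$ and $S_0$. Gluing $\widetilde X_S$ to $-\widetilde X_{S_0}$ gives a closed branched cover of the double $X_n(K)\cup -X_n(K)$. The branch locus is $S\sqcup S_0$, with total self-intersection $n-n=0$, and the double has signature $0$. The formula $\sigma_{\xi^j}=\sigma(M)-\tfrac{2j(q-j)}{q^2}[F]^2$ therefore gives eigenspace signature $0$ for the closed manifold. So gluing only shows $\sigma_\xi(\widetilde X_S)=\sigma_\xi(\widetilde X_{S_0})$. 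If the closed signature were $\pm\sigma_K(\xi)$, as you assert, you would get $\sigma_\xi(\widetilde X_S)=\sigma_\xi(\widetilde X_{S_0})\pm\sigma_K(\xi)$, which bounds nothing. The value $\sigma_K(\xi)$ must instead come from the model itself. $\widetilde X_{S_0}$ is Viro's cover of $B^4$ along a pushed-in Seifert surface, plus an invariant lifted $2$-handle. That handle and its attaching solid torus have no $\xi$-eigenspace homology.

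\textbf{The dimension bound in Step 2.} Your justification of $\dim H_2(\widetilde X_S;\mathbb{C})_\xi\le 2g$ is false. We have $H_1(X_n(K)\setminus S;\Z)\cong\Z/n$ with $p\mid n$, so the complement is not $\Z/p$-acyclic. Also, the trivial eigenspace of $\widetilde X_S$ contains $H_2(X_n(K))$. A correct count goes as follows.
\begin{enumerate}
\item The $\xi$-Euler characteristic is $\chi(X_n(K)\setminus S)=2g$.
\item $H_1(\widetilde X_S;\mathbb{C})_\xi=0$ by Mayer--Vietoris over $B^4\cup h^2$. The cover of $B^4$ along $S\cap B^4$ has no $\xi$-part in $H_1$, by the prime-power hypothesis. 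The cover of the handle along the $2r+1$ core disks is $D^2\times\widetilde{D^2}$, with $\widetilde{D^2}$ of genus $r(q-1)$. Its $2r$-dimensional $\xi$-part of $H_1$ is hit isomorphically from the attaching region $S^1\times\widetilde{D^2}$.
\item $H_3(\widetilde X_S;\mathbb{C})_\xi=0$ by duality, since the boundary cover is connected and the same vanishing holds at $\bar\xi$.
\end{enumerate}
So $r$ does enter, through exactly this $2r$-dimensional cancellation.

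With these repairs, and a topological-category $G$-signature formula for locally flat branch loci, your route is a valid alternative. The paper avoids branched covers of the trace altogether. Lemma~\ref{shaking} turns $(g,r)$ into a connected genus-$g$ surface $F\subset B^4$ with $\partial F=S_{2r+1,n}(K)$ and $\beta_1(F)=2g+2r$. Proposition~\ref{signature-nullity} reads off from $V\oplus M_n^{\oplus r}$ that, when $\xi^n=1$, the shaking has signature $\sigma_K(\xi)$ and nullity $\eta_K(\xi)+2r$. The off-the-shelf topological Murasugi--Tristram inequality then gives $|\sigma_K(\xi)|+2r\le 2g+2r$.

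\textbf{The choice of $\xi$ in Step 3.} We have $\Delta_{5_2}(e^{i\theta})=4\cos\theta-3$, which vanishes at $\cos\theta=3/4$. So $\Delta_{5_2}$ does have roots on the unit circle, and $\sigma_{5_2}(e^{i\theta})=0$ whenever $\cos\theta>3/4$. For example, if $11\mid n$ and you take $\xi=e^{2\pi i/11}$, you get no bound. The matrix $\begin{psmallmatrix}2&1\\0&3\end{psmallmatrix}$ is also not a Seifert matrix of $5_2$, since its Alexander polynomial is $6t-11+6t^{-1}$. You must choose the primitive $p$-th root deliberately, as the paper does. Take $\xi=\exp(2\pi i\lfloor p/2\rfloor/p)$: its argument lies in $[2\pi/3,\pi]$, where $H_{V_0}(\xi)$ is negative definite.

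\textbf{A minor point.} $H_2(X_n(K),\partial X_n(K);\Z)\cong\Z$, not $\Z/n$. Existence of the $q$-fold branched cover is cleanest from the fact that $H_1(X_n(K)\setminus S;\Z)\cong\Z/n$ is generated by a meridian.
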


The lower bound arises because, at the nontrivial $|n|$-th roots of unity of prime-power order, the shaking's additional nullity $2r$ cancels the contribution of its additional boundary components to the first Betti number in the Murasugi-Tristram inequality. Thus the minimum realized genus can remain constant as $r$ increases, even for knots with vanishing Arf invariant. For the knots $K_{n,k}$ in Theorem \ref{realization}, this minimum is instead $\max\{k-r,0\}$, decreasing by one for each unit increase in $r$ until genus zero is reached.

\vspace{6pt}
\paragraph{Organization}
Section \ref{sec:profiles} establishes the general properties of genus-intersection profiles. Section \ref{sec:algebra} proves the determinant formula and the cabling transfer theorem. Section \ref{sec:cables} proves Theorems \ref{realization} and \ref{main-cables}, determines the shaking numbers of the topological-smooth examples from \cite[Corollary 2.3]{FMNOPR}, and gives a restriction for Alexander polynomials of breadth at most two. Section \ref{sec:signatures} proves Theorem \ref{main-signature}. Section \ref{sec:scope} discusses the limitations of the certificates, the zero-framing case, and further questions.

\section*{Acknowledgments}
\noindent The author thanks Patrick Orson and Aru Ray for helpful discussions and for explaining their paper \cite{FMNOPR}.

\section{Genus-intersection profiles}\label{sec:profiles}
We work in the locally flat topological category unless otherwise specified. Write $\gfour(K)$ for the topological slice genus and $g_3(K)$ for the Seifert genus. For $n\in \Z$ and $r\in \Z_{\ge 0}$, define
\[
    \gamma_n(K;r) = \min\{g\in\Z_{\ge 0} \mid (g,r)\in\cP_n(K)\}, 
    \qquad \gsh^n(K) = \min_{r\in\Z_{\ge 0}}\gamma_n(K;r).
\]
Here $2r+1$ is the geometric intersection number with the cocore $C$ of the $2$-handle in $X_n(K)$. The set defining $\gamma_n(K;r)$ is nonempty for every $r$: a capped Seifert surface has one cocore intersection, and local isotopies introduce pairs of opposite-sign intersections without changing the genus. Since the genera are nonnegative integers, both minima are well defined. A knot is $n$-shake slice precisely when $\gsh^n(K)=0$. These definitions impose no restriction on the complement groups of the generating surfaces.

We call $K$ \textit{$\Z/n\Z$-shake slice} if it admits a generating sphere with cyclic complement group. To see why this determines the group, let $\Sigma\subset X_n(K)$ be any generating surface. Since $\Sigma\cdot\Sigma=n$, the relative homology sequence for its exterior identifies the first homology of its complement with the cokernel of multiplication by $n$ on $\Z$. Thus
\[
    H_1\bigl(X_n(K)\setminus\Sigma;\Z\bigr) \cong \Z/n\Z.
\]
A meridian of $\Sigma$ generates this homology group. If the fundamental group of the complement is cyclic, it is therefore isomorphic to $\Z/n\Z$, generated by the same meridian. When $n=0$, this group is $\Z$.

Let $S_{2r+1,n}(K)$ be the link consisting of $2r+1$ parallel copies of $K$ taken with respect to the $n$-framing, with $r+1$ oriented as $K$ and $r$ oppositely. The following form of the shaking correspondence also records the geometric intersection number.

\begin{lemma}[{\cite[Lemma 8.3]{FMNOPR}}]\label{shaking}
    A pair $(g,r)$ lies in $\cP_n(K)$ if and only if $S_{2r+1,n}(K)$ bounds a connected, oriented, locally flat genus-$g$ surface in $B^4$.
\end{lemma}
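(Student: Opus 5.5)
The plan is to prove both directions of the lemma by the standard handle-decomposition correspondence. The key step is to compare $X_n(K)$ cut along a collar of the cocore with $B^4$.

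Write $X_n(K)=B^4\cup_{\nu K} H$, where $H=D^2\times D^2$ is the $2$-handle attached along a tubular neighborhood of $K$ using the $n$-framing. The core is $D^2\times\{0\}$ and the cocore is $C=\{0\}\times D^2$.

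\emph{Forward direction.} Let $S$ be a locally flat generating surface of genus $g$ that meets $C$ transversely in $2r+1$ points.

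1. By local flatness and transversality, isotope $S$ near $C$ so that $S\cap H$ is a union of $2r+1$ parallel disks $D^2\times\{p_i\}$, one for each point $p_i\in S\cap C$. This uses the existence of normal bundles for transverse intersection points of a locally flat surface with a smooth disk, which holds in dimension $4$ by Freedman–Quinn. After shrinking $H$, the transverse intersections give product neighborhoods of this form.

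2. Remove these disks. The remainder $F=S\cap B^4$ is a connected surface of genus $g$, since deleting open disks from a connected surface keeps it connected. Its boundary is the collection of parallel pushoffs $\partial D^2\times\{p_i\}$, which are $n$-framed parallels of $K$.

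3. Determine the orientations. Each boundary component is oriented parallel or antiparallel to $K$ according to the sign of the corresponding intersection point with $C$. Since $S$ generates $H_2$, the algebraic intersection number is $S\cdot C=\pm1$. After reversing the orientation of $S$ if needed, $r+1$ boundary components are oriented along $K$ and $r$ against it.

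4. Conclude that $\partial F=S_{2r+1,n}(K)$. The parallel copies may come in any order along the framing normal direction, so I must check that the link type does not depend on which parallels carry which orientation. Parallel copies in the $n$-framing can be permuted by isotopy: rotate the points $p_i$ around one another in the $D^2$ factor.

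\emph{Reverse direction.} Suppose $S_{2r+1,n}(K)$ bounds a connected genus-$g$ surface $F\subset B^4$.

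1. Place the components of $S_{2r+1,n}(K)$ as $\partial D^2\times\{p_i\}$ on the attaching region.

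2. Cap each component with the disk $D^2\times\{p_i\}$ in $H$, oriented compatibly with that component. The result is a closed, connected, locally flat surface $S$ of genus $g$ meeting $C$ in exactly $2r+1$ points.

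3. Check that $S$ generates $H_2$. The signed intersection count with $C$ is $(r+1)-r=1$. Since $H_2(X_n(K))\cong\Z$ is detected by intersection with the cocore via $H_2(X,\partial X)$ duality, $S$ represents a generator.

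\emph{Main obstacle.} The main obstacle is step 1 of the forward direction: normalizing a locally flat surface near its transverse intersections with $C$ in the topological category. I would handle it by citing the topological transversality and normal bundle theorems (Freedman–Quinn, Chapter 9) and then shrinking the handle radially.
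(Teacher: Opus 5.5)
Your proposal is correct and follows essentially the same route as the paper. In the forward direction you straighten near the cocore, remove a thin product neighborhood of $C$ (your shrunken handle) to recover a $4$-ball, and read off $S_{2r+1,n}(K)$ from the intersection signs. In the reverse direction you cap with $2r+1$ parallel core disks and use the signed count $(r+1)-r=1$. Your extra check, that the arrangement of the oppositely oriented parallels does not change the link type (by moving the points $p_i$ around in the cocore factor), is a detail the paper leaves implicit.
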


\begin{proof}
Suppose that a generating surface $\Sigma\subset X_n(K)$ realizes $(g,r)$. Orient $\Sigma$ so that its algebraic intersection with $C$ is $+1$. After straightening $\Sigma$ near its intersections with $C$, choose a sufficiently small product neighborhood of $C$ so that its intersection with $\Sigma$ consists of $2r+1$ transverse disks, one at each intersection point. Removing this neighborhood and rounding corners recovers a $4$-ball and punctures $\Sigma$ along these disks without changing its genus. The resulting boundary consists of $n$-framed parallel copies of $K$. The intersection signs give $r+1$ copies oriented as $K$ and $r$ oppositely, so this boundary is $S_{2r+1,n}(K)$.

Conversely, cap a connected, oriented, locally flat genus-$g$ surface bounded by $S_{2r+1,n}(K)$ with $2r+1$ parallel handle-core disks, using the prescribed orientations. The resulting closed surface is connected and has genus $g$. The algebraic multiplicity of the core disks is $(r+1)-r=1$, so it represents a generator of $H_2(X_n(K);\Z)$. Each core disk meets the cocore once, giving exactly $2r+1$ geometric intersections.
\end{proof}

\begin{proposition}\label{profile-structure}
    For every knot $K$ and every integer $n$, the following hold.
    \begin{enumerate}[label=(\alph*)]
        \item If $(g,r)\in\cP_n(K)$, then $(g+1,r)$ and $(g,r+1)$ lie in $\cP_n(K)$. If $r\ge 1$, then $(g+1,r-1)$ also lies in $\cP_n(K)$.
        
        \item One has $\gamma_n(K;0)=\gfour(K)$ and
        \[
            \gamma_n(K;r) \le \gamma_n(K;r-1) \le \gamma_n(K;r)+1 \qquad(r\ge 1).
        \]
        
        \item Every $(g,r)\in\cP_n(K)$ satisfies $g+r\ge \gfour(K)$. In particular,
        \[
            \shn_n(K)\ge 2\gfour(K)+1.
        \]
        If $\Arf(K)\neq 0$, no generating sphere exists.
        
        \item The profile depends only on the topological concordance class of $K$, and
        \[
            \cP_{-n}(\bar K)=\cP_n(K).
        \]
    \end{enumerate}
\end{proposition}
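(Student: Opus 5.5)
The plan is to translate every item into a statement about surfaces in $B^4$ bounded by the links $S_{2r+1,n}(K)$, using Lemma \ref{shaking}, and to argue with local modifications of those surfaces.

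\textbf{Part (a).} For $(g+1,r)$, add a trivial tube to a realizing surface. For $(g,r+1)$, take a realizing closed surface $\Sigma\subset X_n(K)$ and perform a local finger move of $\Sigma$ across $C$. This creates two new intersections of opposite sign and leaves the genus and homology class unchanged, as already noted in the discussion of $\gamma_n$.

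For $(g+1,r-1)$ with $r\ge1$, work on the $B^4$ side. Pick two adjacent components of $S_{2r+1,n}(K)$ with opposite orientations. They cobound an annulus in $S^3$, namely the framing annulus between $n$-framed parallels. Push this annulus slightly into a collar of $B^4$ and glue it to the given surface $F$ along those two boundary components. The result is a connected surface bounded by $S_{2r-1,n}(K)$ with genus $g+1$: we add an annulus along two circles of one connected surface, so $\chi$ drops by nothing while the number of boundary components drops by two. The annulus is then made disjoint from the remaining components by a small push in the collar direction.

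\textbf{Part (b).} The inequality $\gamma_n(K;r)\le\gamma_n(K;r-1)$ follows from the second claim of (a), and $\gamma_n(K;r-1)\le\gamma_n(K;r)+1$ from the third.

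For $\gamma_n(K;0)=\gfour(K)$, note that $S_{1,n}(K)=K$, so by Lemma \ref{shaking} both quantities are the minimal genus of a surface in $B^4$ bounded by $K$.

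\textbf{Part (c).} Iterating the third claim of (a) sends $(g,r)$ to $(g+r,0)$, so $g+r\ge\gamma_n(K;0)=\gfour(K)$.

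A generating sphere with $2r+1$ intersections then gives $r\ge\gfour(K)$, which is the bound on $\shn_n(K)$.

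For the Arf statement, suppose a sphere with $2r+1$ intersections exists. Then $(0,r)$ is realized, so $(r,0)$ is realized, so $K$ bounds a genus-$r$ surface in $B^4$. That alone does not contradict $\Arf(K)\ne0$, so a separate argument is needed; I expect this to be the main obstacle. The plan is to use the standard fact that a generating sphere $S$ in $X_n(K)$ forces $\Arf(K)=0$ via the Kervaire--Arf/Rokhlin-type invariant of a characteristic-type sphere. Concretely, the sphere bounded by $S_{2r+1,n}(K)$ is a planar surface in $B^4$. The Arf invariant of a proper (suitably) link bounding a planar surface vanishes by Robertello's argument. For $S_{2r+1,n}(K)$, the link is proper since pairwise linking numbers are $\pm n$ and $2r$ pairs pair up evenly. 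A parity check then shows its Arf invariant equals $\Arf(K)$, since the parallel copies contribute cancelling terms. Rather than recompute this, I would cite the corresponding statement in \cite{FMNOPR}, where $\Arf(K)=0$ is shown to be necessary for shake sliceness.

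\textbf{Part (d).} Concordance invariance: if $K$ and $K'$ are concordant via an annulus $A$, take $2r+1$ $n$-framed parallel copies of $A$, using that the framing extends since the normal bundle of $A$ is trivial with Seifert framings matching. This gives a planar cobordism of genus zero between $S_{2r+1,n}(K)$ and $S_{2r+1,n}(K')$ consisting of disjoint annuli. Gluing it to a surface in $B^4$ shows that $(g,r)$ is realized for $K$ exactly when it is realized for $K'$.

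Mirroring: the orientation-reversing map of $B^4$ carries $S_{2r+1,n}(K)$ to $S_{2r+1,-n}(\bar K)$, since framings negate under mirroring. Applying Lemma \ref{shaking} gives $\cP_{-n}(\bar K)=\cP_n(K)$.
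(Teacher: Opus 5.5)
Your proposal is correct and follows essentially the same route as the paper. The differences are cosmetic: you carry out the tubing move in (a) and the mirror identity in (d) on the $B^4$ side via Lemma \ref{shaking}. Your framing annulus between two adjacent, oppositely oriented parallels is exactly the lateral annulus $a\times\partial D^2$ that the paper attaches along an arc $a$ in the cocore. Reflecting $B^4$ amounts to the paper's identification $-X_n(K)\cong X_{-n}(\bar K)$. For the Arf statement you end, as the paper does, by citing the Robertello argument of \cite{FMNOPR}, so the detour through characteristic spheres and Rokhlin-type invariants is unnecessary.
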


\begin{proof}
For (a), let $\Sigma$ realize $(g,r)$. Connect-summing $\Sigma$ with a small unknotted torus away from $C$ raises its genus by one without changing its cocore intersections. A local isotopy of $\Sigma$ across $C$ introduces two intersections of opposite signs while keeping $\Sigma$ embedded and preserving its genus.

For the last move, assume $r\ge 1$ and choose intersections $p_+,p_-$ of opposite signs. Such a pair exists because the algebraic intersection is $\pm1$. Join them by an embedded arc $a\subset C$ whose interior avoids $\Sigma$. After straightening $\Sigma$ near the endpoints, restrict a sufficiently small normal $2$-disk bundle of the cocore $C$ to $a$. Its total space is an embedded $3$-ball
\[
    B\cong a\times D^2
\]
meeting $\Sigma$ precisely in the two endpoint disks and meeting $C$ in the zero section $a\times\{0\}$.

Replace the endpoint disks in $\Sigma$ by the lateral annulus $a\times\partial D^2$ and round the corners. The opposite intersection signs ensure that the annulus can be attached compatibly with the orientation of $\Sigma$. The resulting surface is connected, has genus $g+1$, and has exactly two fewer intersections with $C$. The old and new local pieces differ by the oriented boundary of $B$, up to sign, so the homology class is unchanged. This realizes $(g+1,r-1)$.

Since $S_{1,n}(K)$ is isotopic to $K$, the case $r=0$ of Lemma \ref{shaking} identifies $\gamma_n(K;0)$ with $\gfour(K)$. The remaining inequalities in (b) follow by applying the corresponding moves in (a) to genus-minimizing surfaces.

Repeating the last move in (a) exactly $r$ times turns $(g,r)$ into $(g+r,0)$. Part (b) then gives $g+r\ge \gfour(K)$, proving the genus-intersection bound in (c). Applying this bound to a generating sphere gives the asserted lower bound on $\shn_n(K)$, with the convention $\shn_n(K)=\infty$ when no generating sphere exists. The Arf obstruction holds for every integer framing by Robertello's theorem, as proved in \cite[Proposition 6.5]{FMNOPR}, without any assumption on the complement group.

For (d), suppose that $K$ and $K'$ are topologically concordant. A normal push-off extending along the concordance annulus has the same linking number with the respective boundary knot at its two ends. Consequently, the concordance carries the zero framing at one end to the zero framing at the other, and hence carries each integer framing to the same integer framing.

Taking $2r+1$ parallel copies of this annulus, with the prescribed orientations, gives a concordance between $S_{2r+1,n}(K)$ and $S_{2r+1,n}(K')$. Gluing these annuli to a bounding surface preserves its genus and the number of boundary components. Lemma \ref{shaking} therefore gives $\cP_n(K)\subseteq\cP_n(K')$. Applying the reversed concordance gives the reverse inclusion.

Finally, mirroring identifies the orientation-reversed trace $-X_n(K)$ with $X_{-n}(\bar K)$, carrying cocore to cocore and preserving the geometric intersection number. This proves the mirror identity.
\end{proof}

In particular, the profile is determined by the nonincreasing sequence of nonnegative integers $r\mapsto\gamma_n(K;r)$, whose successive drops are at most one. The inequality in part (c) is the genus-intersection lower bound of \cite[Remark 8.6]{FMNOPR}. Its proof above will also be useful in determining exact profiles.

\section{An algebraic criterion and its transfer under cabling}\label{sec:algebra}
We give a Seifert-form criterion for constructing planar surfaces bounded by shakings, and show how isotropic certificates lift under cabling.
We work with positive framings throughout this section. Let $V$ be a $2g\times2g$ integral Seifert matrix, and write $\omega=V-V^T$. Thus $\omega$ is a unimodular alternating matrix and $\det\omega=1$. Fix integers $n\ge 1$ and $k\ge 0$. For indices in $\{0,\ldots,n-1\}$, set
\[
    (Z_n)_{ij}=\delta_{j,i+1\bmod n},\qquad
    M_n=
    \begin{pmatrix}
        0&I_n\\ 
        Z_n&0
    \end{pmatrix},\qquad
    \theta=V\oplus M_n^{\oplus k}.
\]
Within each added summand, this convention for $Z_n$ gives $\theta(\beta_i,\alpha_j)=\delta_{j,i+1\bmod n}$. The matrix $\theta$ is a Seifert matrix of the shaking $S_{2k+1,n}(K)$ whenever $V$ is a Seifert matrix of $K$ \cite[Lemma 8.9]{FMNOPR}. Simultaneously reversing the cyclic indexing of the $\alpha$- and $\beta$-bases identifies our convention with the transpose convention for $Z_n$. The corresponding connected Seifert surface has $2k+1$ boundary components and genus $g+k(n-1)$.

Our geometric aim is to obtain a planar surface bounded by the shaking while retaining its $2k+1$ boundary components. The intersection form $\theta-\theta^T$ of this surface has a radical of rank $2k$, generated by its boundary classes. We seek a complementary direct summand $A$ of rank $2(g+k(n-1))$ on which the Seifert form is Alexander-trivial. An Alexander-trivial summand of this rank can be realized by a subsurface containing all the genus, whose single boundary component has Alexander polynomial one \cite[Proposition 9]{FL18}. Replacing this subsurface by a locally flat disk in $B^4$ produces the desired planar surface. Capping its boundary components with parallel handle-core disks then gives a generating sphere with $2k+1$ cocore intersections. The vectors $u_l,v_l$ introduced below parametrize a family of complements to the radical, and the determinant formula tests when the resulting restriction is Alexander-trivial.

\subsection{A complement to the radical}
Let
\[
    H=\Z^{2g}\oplus D_1\oplus\cdots\oplus D_k, \qquad D_l=\langle\alpha_0^l,\ldots,\alpha_{n-1}^l, \beta_0^l,\ldots,\beta_{n-1}^l\rangle.
\]
We regard $\theta$ as a bilinear form on $H$. Put
\[
    \alpha_\Sigma^l=\sum_{i=0}^{n-1}\alpha_i^l, \qquad \beta_\Sigma^l=\sum_{i=0}^{n-1}\beta_i^l, \qquad R=\langle\alpha_\Sigma^l,\beta_\Sigma^l \mid 1\le l\le k\rangle.
\]

\begin{lemma}\label{radical}
    The radical of $\theta-\theta^T$ is $R$. It is a direct summand of $H$ of rank $2k$. The restriction of $\theta$ to each summand $\langle\alpha_\Sigma^l,\beta_\Sigma^l\rangle$ has matrix $\begin{psmallmatrix}0&n\\ n&0\end{psmallmatrix}$, and distinct such summands are orthogonal.
\end{lemma}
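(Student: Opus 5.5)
Since $\theta=V\oplus M_n^{\oplus k}$ is block diagonal, $\theta-\theta^T=\omega\oplus\bigoplus_l(M_n-M_n^T)$. The first block $\omega$ is unimodular, so its radical is zero, and the radical of $\theta-\theta^T$ is the direct sum of the radicals of the blocks $M_n-M_n^T$ on the $D_l$. The plan is therefore to work within a single summand $D=D_l$ and show three things: the radical of $N=M_n-M_n^T$ on $D$ is $\langle\alpha_\Sigma,\beta_\Sigma\rangle$; this is a direct summand of rank $2$; and the restriction of $\theta$ to it is as stated. Orthogonality of distinct summands is then immediate from the block-diagonal form of $\theta$, which also has no cross terms with $V$.

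Writing vectors as $(x,y)$ with respect to the $\alpha$- and $\beta$-bases, we have
\[
    N=\begin{pmatrix}0& I-Z_n^T\\ Z_n-I&0\end{pmatrix}.
\]
So $(x,y)$ lies in the radical exactly when $(I-Z_n^T)y=0$ and $(Z_n-I)x=0$. Since $Z_n$ is a cyclic permutation matrix, the kernels of $Z_n-I$ and $Z_n^T-I$ over $\Z$ both consist of the constant vectors, namely $\Z\cdot(1,\ldots,1)$. The radical on $D$ is therefore $\langle\alpha_\Sigma,\beta_\Sigma\rangle$, of rank $2$. It is a direct summand because $(1,\ldots,1)$ is primitive: completing it to a basis, for instance by $\{\alpha_\Sigma,\alpha_1,\ldots,\alpha_{n-1}\}$ and similarly for $\beta$, gives a unimodular change of basis of $D$. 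Taking the sum over $l$, $R$ has rank $2k$ and is a direct summand of $H$.

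Finally, we compute the restriction of $\theta$ bilinearly. Within the summand $M_n$, we have $\theta(\alpha_i,\alpha_j)=0$ and $\theta(\beta_i,\beta_j)=0$, while $\theta(\alpha_i,\beta_j)=\delta_{ij}$ comes from the block $I_n$ and $\theta(\beta_i,\alpha_j)=\delta_{j,i+1}$. Hence $\theta(\alpha_\Sigma,\alpha_\Sigma)=\theta(\beta_\Sigma,\beta_\Sigma)=0$, and $\theta(\alpha_\Sigma,\beta_\Sigma)=\sum_{i,j}\delta_{ij}=n$. For the remaining entry, $\theta(\beta_\Sigma,\alpha_\Sigma)=\sum_i\sum_j\delta_{j,i+1}=n$, since each row of $Z_n$ contains exactly one $1$. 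This gives the matrix $\begin{psmallmatrix}0&n\\ n&0\end{psmallmatrix}$. No step is a real obstacle. The only points needing care are that the kernel computation is over $\Z$, so that the primitive generator $(1,\ldots,1)$ spans it, and that the $\omega$ block contributes no radical because $\det\omega=1$.
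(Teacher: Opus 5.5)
Your proposal is correct and follows essentially the same route as the paper: the block-diagonal reduction with $\omega$ contributing no radical, the integral kernels of $I_n-Z_n^T$ and $Z_n-I_n$ spanned by the all-ones vector, a basis completion showing that $R$ is a direct summand, and summing the entries of $I_n$ and $Z_n$ to get the restricted matrix. The only difference is cosmetic: you complete with $\{\alpha_\Sigma,\alpha_1,\ldots,\alpha_{n-1}\}$, whereas the paper uses $\{\alpha_0,\ldots,\alpha_{n-2},\alpha_\Sigma\}$.
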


\begin{proof}
The two off-diagonal blocks of $M_n-M_n^T$ are $I_n-Z_n^T$ and $Z_n-I_n$. Each has integral kernel generated by the all-ones vector. Since $\omega$ is nonsingular, this identifies the radical exactly as $R$. Moreover,
\[
    \alpha_0^l,\ldots,\alpha_{n-2}^l,\alpha_\Sigma^l, \quad \beta_0^l,\ldots,\beta_{n-2}^l,\beta_\Sigma^l
\]
is an integral basis of $D_l$, proving that $R$ is a direct summand. The remaining assertions follow by summing the entries of $I_n$ and $Z_n$.
\end{proof}

Choose column vectors $u_l,v_l\in\Z^{2g}$, for $1\le l\le k$. For each standard basis vector $x$ of $\Z^{2g}$, define
\[
    \widehat{x}=
    x + \sum_{l=1}^k \bigl((v_l^Tx)\alpha_\Sigma^l+(u_l^Tx)\beta_\Sigma^l\bigr).
\]
Let $A\le H$ be generated by these vectors, followed, for each $l$, by
\[
    \alpha_0^l,\ldots,\alpha_{n-2}^l, \beta_0^l,\ldots,\beta_{n-2}^l.
\]
Lists with upper index $n-2=-1$ are understood to be empty. Denote the matrix of $\theta|_A$ in this ordered generating set by $\theta_A$, and define the Laurent-polynomial matrix
\begin{equation}\label{eq:certificate-matrix}
    E_{V,n,\boldsymbol u,\boldsymbol v}(t)
    = tV - V^T + (t^n-1)\sum_{l=1}^k \bigl(u_lv_l^T+t^{1-n}v_lu_l^T\bigr).
\end{equation}
We write $E(t)$ when the data are clear. A subgroup is Alexander-trivial if the restriction matrix $B$ of the Seifert form satisfies $\det(tB-B^T)=\pm t^d$ for some integer $d$.

\begin{theorem}\label{det-formula}
    For arbitrary integral vectors $u_l,v_l$, the displayed generators of $A$ form a basis, $H=A\oplus R$, and
    \begin{equation}\label{eq:det-formula}
        \det(t\theta_A-\theta_A^T)
        =t^{k(n-1)}\det E(t).
    \end{equation}
    In particular, $A$ is Alexander-trivial if and only if $\det E(t)$ is a unit in $\Z[t,t^{-1}]$.
\end{theorem}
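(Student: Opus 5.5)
The plan is to verify the basis claim by a unitriangular change of basis, and then compute the determinant by a Schur complement that eliminates the $\alpha,\beta$ generators of each $D_l$. The case $n=1$ serves as a sanity check.

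\emph{Basis and splitting.} Lemma \ref{radical} shows that $H$ has the integral basis consisting of the standard basis of $\Z^{2g}$ together with, for each $l$, the vectors $\alpha_0^l,\dots,\alpha_{n-2}^l,\alpha_\Sigma^l,\beta_0^l,\dots,\beta_{n-2}^l,\beta_\Sigma^l$. Each $\widehat{x}$ equals $x$ plus an integral combination of the $\alpha_\Sigma^l,\beta_\Sigma^l$. Hence the change-of-basis matrix from this basis to the generators of $A$ followed by those of $R$ is unitriangular. This gives both that the generators of $A$ form a basis and that $H=A\oplus R$.

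\emph{Entries of $\theta_A$.} From $\theta(\alpha_i,\beta_j)=\delta_{ij}$ and $\theta(\beta_i,\alpha_j)=\delta_{j,i+1}$ one reads off
\[
\theta(\alpha_\Sigma,\beta_j)=\theta(\beta_j,\alpha_\Sigma)=\theta(\beta_\Sigma,\alpha_j)=\theta(\alpha_j,\beta_\Sigma)=1,
\]
and $\theta$ vanishes on $\alpha$--$\alpha$ and $\beta$--$\beta$ pairs. Together with Lemma \ref{radical} this gives
\[
\theta(\widehat{x},\widehat{y})=V_{xy}+n\sum_l\bigl((v_l^Tx)(u_l^Ty)+(u_l^Tx)(v_l^Ty)\bigr).
\]
The couplings are $\theta(\widehat{x},\alpha_j^l)=\theta(\alpha_j^l,\widehat{x})=u_l^Tx$ and $\theta(\widehat{x},\beta_j^l)=\theta(\beta_j^l,\widehat{x})=v_l^Tx$. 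Distinct $l$ do not interact.

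\emph{Structure of $t\theta_A-\theta_A^T$.} The top-left block is
\[
tV-V^T+n(t-1)\sum_l\bigl(v_lu_l^T+u_lv_l^T\bigr),
\]
written in the convention where the $(x,y)$ entry is read as above. Each off-diagonal coupling block has the form $(t-1)\,(\text{column of }u_l^Tx\text{ or }v_l^Tx)$ times an all-ones row. The $l$-th diagonal block is $P=tN-N^T$, where $N=\begin{psmallmatrix}0&I_{n-1}\\ Z'&0\end{psmallmatrix}$ and $Z'$ is the truncated shift, which is strictly triangular. Thus
\[
\det P=\pm\det(tI-Z'^T)\det(tZ'-I)=\pm t^{n-1},
\]
so $P$ is invertible over $\Z[t,t^{-1}]$. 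This accounts for the factor $t^{k(n-1)}$. The Schur complement formula then gives
\[
\det(t\theta_A-\theta_A^T)=\prod_l\det P\cdot\det\bigl(\text{top block}-\textstyle\sum_l(t-1)^2\,W_l\bigr),
\]
where $W_l$ is built from $u_l,v_l$ and the four scalars $\mathbf 1^TQ\,\mathbf 1$, with $Q$ ranging over the quadrants of $P^{-1}$.

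\emph{Main obstacle.} The hard step is computing these four scalars and showing that they combine with the $n(t-1)$ terms into exactly $(t^n-1)u_lv_l^T+(t^n-1)t^{1-n}v_lu_l^T$. First I would solve $P\begin{psmallmatrix}a\\ b\end{psmallmatrix}=\begin{psmallmatrix}\mathbf 1\\ 0\end{psmallmatrix}$ and $P\begin{psmallmatrix}a\\ b\end{psmallmatrix}=\begin{psmallmatrix}0\\ \mathbf 1\end{psmallmatrix}$ directly. Because $tI-Z'^T$ and $tZ'-I$ are bidiagonal, the solutions are geometric progressions in $t^{-1}$. Their sums give the diagonal-quadrant scalars as $0$ and the off-diagonal ones as
\[
-\sum_{i=1}^{n-1}\frac{i\,t^{-i}}{\cdots}\quad\text{type expressions}.
\]
One must check that
\[
n(t-1)-(t-1)^2\cdot(\text{sum})=t^n-1 \quad\text{resp.}\quad t^{1-n}(t^n-1).
\]
An identity of the form $n(t-1)-(t-1)^2\sum_{i<n}(n-i)t^{i-1}\cdots$ should telescope. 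The overall sign is fixed by comparing at $k=0$, or by evaluating at $t=1$, where both sides reduce to $\pm\det(V-V^T)$ up to the $P$-factor sign.

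If the direct inversion becomes messy, I would fall back on induction on $n$, viewing passage from $n$ to $n+1$ as inserting one more $\alpha,\beta$ pair and expanding along it.

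The final "in particular" follows at once, since $t^{k(n-1)}$ is a unit.
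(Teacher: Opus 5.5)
Your outline follows the paper's proof: the same unimodular basis change giving $H=A\oplus R$, the same pairings of $\alpha_\Sigma,\beta_\Sigma$ with the remaining curves, and the same Schur complement against the block-antidiagonal matrix $P$ (the paper's $D_0$), whose inverse has zero diagonal quadrants. The approach is correct, but the one step that carries the content of the theorem is left as ``should telescope'', so as written this is an outline rather than a proof.

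That step is short. Put $p=n-1$ and $b_p(t)=\sum_{r=0}^{p-1}(p-r)t^r$. Nilpotency of $Z'$ and $\mathbf 1^TZ'^r\mathbf 1=p-r$ give
\[
\mathbf 1^T(tZ'-I)^{-1}\mathbf 1=-\sum_{r=0}^{p-1}t^r\,\mathbf 1^TZ'^r\mathbf 1=-b_p(t),\qquad \mathbf 1^T(tI-Z'^T)^{-1}\mathbf 1=t^{-1}b_p(t^{-1}).
\]
So the first scalar is a polynomial in $t$, not a progression in $t^{-1}$ as you predicted. The $l$-th Schur correction is
\[
(t-1)^2\bigl(-b_p(t)\,u_lv_l^T+t^{-1}b_p(t^{-1})\,v_lu_l^T\bigr).
\]
The closed forms $(t-1)^2b_p(t)=t^n-nt+n-1$ and $(t-1)^2t^{-1}b_p(t^{-1})=t^{1-n}-n+(n-1)t$ turn the coefficients into
\[
n(t-1)+(t-1)^2b_p(t)=t^n-1,\qquad n(t-1)-(t-1)^2t^{-1}b_p(t^{-1})=t^{1-n}(t^n-1).
\]
This is exactly $E(t)$. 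Your guessed sum $\sum_{i<n}(n-i)t^{i-1}$ is precisely $b_p(t)$, but it enters the $u_lv_l^T$ coefficient with a plus sign.

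There is also no sign to fix at the end. Directly, $\det P=(-1)^{p}\det(tI-Z'^T)\det(tZ'-I)=(-1)^pt^p(-1)^p=t^{n-1}$. Your proposed comparison at $k=0$ could not have detected a sign in $\det P$ anyway, since no $P$ factor occurs there. The fallback induction on $n$ is unnecessary.
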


\begin{proof}
The basis in the proof of Lemma \ref{radical}, together with the standard basis of $\Z^{2g}$, becomes the proposed basis of $A\oplus R$ by adding integral combinations of the generators of $R$ to the standard basis vectors. This is a unimodular basis change, so $H=A\oplus R$.

If $n=1$, there are no remaining $\alpha_i^l,\beta_i^l$ in the basis of $A$, and
\[
    \theta_A=V+\sum_l(u_lv_l^T+v_lu_l^T).
\]
The asserted identity follows immediately. Assume henceforth that $n\ge 2$, and put $p=n-1$. Let $N$ be the $p\times p$ matrix with $N_{ij}=\delta_{i,j+1}$, and let $\mathbf1\in\Z^p$ be the all-ones vector.

The pairings involving the sums of the $\alpha$- and $\beta$-curves are
\[
    \begin{aligned}
        \theta(\alpha_\Sigma,\beta_j)
        &=\theta(\beta_\Sigma,\alpha_j)
        =\theta(\alpha_i,\beta_\Sigma)
        =\theta(\beta_i,\alpha_\Sigma)=1,\\
        \theta(\alpha_\Sigma,\beta_\Sigma)
        &=\theta(\beta_\Sigma,\alpha_\Sigma)=n.
    \end{aligned}
\]
Consequently,
\[
    t\theta_A-\theta_A^T=\begin{pmatrix}X&C\\ C^T&D\end{pmatrix},
\]
where
\[
    \begin{aligned}
        X&=tV-V^T+n(t-1)\sum_l(u_lv_l^T+v_lu_l^T),\\
        C&=(t-1)
        [\,u_1\mathbf1^T\mid v_1\mathbf1^T\mid\cdots
        \mid u_k\mathbf1^T\mid v_k\mathbf1^T\,],\\
        D&=\bigoplus_{l=1}^kD_0,
        \qquad
        D_0=\begin{pmatrix}0&tI_p-N\\ tN^T-I_p&0\end{pmatrix}.
    \end{aligned}
\]
Since $N$ is nilpotent,
\[
    \det D_0 =(-1)^p\det(tI_p-N)\det(tN^T-I_p) =t^p.
\]
Thus $D$ is invertible over $\Q(t)$, and the Schur-complement formula gives
\[
    \det(t\theta_A-\theta_A^T)=t^{kp}\det(X-CD^{-1}C^T).
\]
The individual inverse blocks are
\[
    D_0^{-1}=
    \begin{pmatrix}
        0&(tN^T-I_p)^{-1}\\
        (tI_p-N)^{-1}&0
    \end{pmatrix}.
\]
Writing
\[
    b_p(t)=\sum_{r=0}^{p-1}(p-r)t^r,
\]
and using $\mathbf1^TN^r\mathbf1=p-r$, we obtain
\[
    \begin{aligned}
        \mathbf1^T(tN^T-I_p)^{-1}\mathbf1&=-b_p(t),\\
        \mathbf1^T(tI_p-N)^{-1}\mathbf1&=t^{-1}b_p(t^{-1}).
    \end{aligned}
\]
It follows that
\[
    CD^{-1}C^T=(t-1)^2\sum_l \bigl(-b_p(t)u_lv_l^T+t^{-1}b_p(t^{-1})v_lu_l^T\bigr).
\]
The elementary identities
\[
    \begin{aligned}
        (t-1)^2b_p(t)&=t^{p+1}-(p+1)t+p,\\
        (t-1)^2t^{-1}b_p(t^{-1})&=t^{-p}-(p+1)+pt
    \end{aligned}
\]
show that the respective coefficients of $u_lv_l^T$ and $v_lu_l^T$ in $X-CD^{-1}C^T$ are
\[
    n(t-1)+(t-1)^2b_p(t)=t^n-1
\]
and
\[
    n(t-1)-(t-1)^2t^{-1}b_p(t^{-1}) =t^{1-n}(t^n-1).
\]
Hence $X-CD^{-1}C^T=E(t)$, which proves \eqref{eq:det-formula}.
\end{proof}

We call the vectors $\{u_l,v_l\}_{l=1}^k$ a certificate for $(V,n)$ if $\det E(t)$ is a Laurent monomial with coefficient $\pm1$. The certificate is isotropic if $v_b^T\omega^{-1}v_a=0$ for every pair $a,b$. A certificate necessarily satisfies
\[
    \det E(t)=t^g.
\]
Indeed, if $d=g+k(n-1)$ and $D_A(t)=\det(t\theta_A-\theta_A^T)$, then
\[
    D_A(t)=t^{2d}D_A(t^{-1}).
\]
By Theorem \ref{det-formula}, $D_A(t)$ is a monomial, and this reciprocity identity forces its exponent to be $d$. The same theorem then gives $\det E(t)=\pm t^g$. Finally, $E(1)=\omega$ and $\det\omega=1$, so the coefficient is $+1$.

\subsection{From certificates to embedded spheres}
\begin{corollary}\label{geometric-certificate}
    Suppose $V$ is a Seifert matrix for $K$ and admits a certificate for $(V,n)$ with $k$ pairs of vectors. Then the shaking $S_{2k+1,n}(K)$ bounds a connected, locally flat planar surface $F\subset B^4$ with $\pi_1(B^4\setminus F)\cong\Z$. Capping its boundary components gives a locally flat sphere $S\subset X_n(K)$ representing a generator of $H_2(X_n(K))$, meeting the cocore in $2k+1$ points, and satisfying
    \[
        \pi_1(X_n(K)\setminus S)\cong \Z/n\Z.
    \]
    In particular, the $n$-shaking number of $K$ is at most $2k+1$.
\end{corollary}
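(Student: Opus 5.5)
We build the planar surface directly from the Seifert surface of the shaking and the Alexander-trivial summand produced by Theorem \ref{det-formula}. By \cite[Lemma 8.9]{FMNOPR}, $S_{2k+1,n}(K)$ bounds a connected Seifert surface $\Sigma\subset S^3$ with Seifert matrix $\theta=V\oplus M_n^{\oplus k}$ on $H=H_1(\Sigma)$. This surface has genus $d=g+k(n-1)$ and $2k+1$ boundary components. Theorem \ref{det-formula} gives $H=A\oplus R$, where $R$ is the radical of the intersection form by Lemma \ref{radical}. The certificate hypothesis, together with \eqref{eq:det-formula}, gives $\det(t\theta_A-\theta_A^T)=t^{d}$, so $A$ is an Alexander-trivial direct summand of rank $2d$.

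The geometric step invokes \cite[Proposition 9]{FL18}. Since $A$ is a complement to the radical, $\theta_A-\theta_A^T$ is unimodular. Hence $A$ is a symplectic summand of $H_1(\Sigma)$ of full rank $2d$. After isotopy of $\Sigma$, Feller--Lewark realize $A$ as $H_1$ of a subsurface $\Sigma'\subset\Sigma$ of genus $d$ with a single boundary component. Its boundary $J=\partial\Sigma'$ has Seifert matrix $\theta_A$ and therefore $\Delta_J=1$. I expect this to be the main point requiring care. One must check that the hypotheses of their proposition apply to a surface with several boundary components, and that the complement $\Sigma\setminus\Sigma'$ is planar, a sphere with $2k+2$ holes. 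The planarity follows from an Euler characteristic count: $\Sigma'$ carries all of the genus, so the complement has genus $0$.

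Next we apply Freedman's theorem. A knot with Alexander polynomial one bounds a locally flat disk $\Delta\subset B^4$ with $\pi_1(B^4\setminus\Delta)\cong\Z$. Push the interior of $\Sigma$ slightly into $B^4$ and replace $\Sigma'$ by $\Delta$, placed in a collar-nested copy of $B^4$. This gives a locally flat planar surface $F$ with boundary $S_{2k+1,n}(K)$. To see that $\pi_1(B^4\setminus F)\cong\Z$, follow the proof of \cite[Theorem 1]{FL18}. By van Kampen, the complement of the pushed-in planar piece union $\Delta$ is built from the $\Z$-complement of $\Delta$, and the remaining planar pieces are attached without creating new generators. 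Their meridians are all identified with the meridian of $\Delta$, since $F$ is connected. Abelianization then shows the group is $\Z$, and the van Kampen presentation shows it is generated by one meridian.

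Finally, cap the $2k+1$ boundary components with parallel core disks of the $2$-handle, oriented as in Lemma \ref{shaking}. This yields a locally flat sphere $S$ representing a generator of $H_2(X_n(K))$ and meeting $C$ in $2k+1$ points. For the fundamental group, $X_n(K)\setminus S$ is obtained from $B^4\setminus F$ by attaching the handle complement. Near the handle, this is $D^2\times(D^2\setminus\{2k+1\text{ points}\})$. Gluing it in adds the relation that the $2$-handle attaching circle, a parallel copy of $K$, is trivial in the complement. That copy represents $n$ times the meridian in $H_1(B^4\setminus F)\cong\Z$. Hence $\pi_1(X_n(K)\setminus S)\cong\Z/n\Z$, consistent with the homology computation in Section \ref{sec:profiles}. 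The bound on $\shn_n(K)$ then follows immediately.
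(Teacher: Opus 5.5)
Your proposal is correct and follows the paper's proof essentially step for step. You take the Seifert surface of the shaking with the Alexander-trivial summand $A$ from Theorem \ref{det-formula}, realize $A$ as a Feller--Lewark subsurface and replace it by a Freedman disk to get $F$ with $\pi_1(B^4\setminus F)\cong\Z$, and then run van Kampen over $S^1\times P$ with the $n$-framed parallel mapping to $\mu^n$. The paper simply cites \cite{FL18} for $\pi_1(B^4\setminus F)\cong\Z$; your heuristic about meridians is more cleanly justified by noting that the pushed-in planar piece contributes only $2$-handles (saddles) to the complement of $\Delta$, so the group is a quotient of $\Z$ with abelianization $\Z$.
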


\begin{proof}
Let $F_0$ be the connected Seifert surface for $S_{2k+1,n}(K)$ used above, and put $d=g+k(n-1)$. This surface has genus $d$. By Theorem \ref{det-formula}, its Seifert form has an Alexander-trivial direct summand $A$ of rank $2d$. By \cite[Proposition 9]{FL18}, $A$ is realized by a genus-$d$ subsurface $F_1\subset F_0$ with one boundary component and connected complement in $F_0$. The knot $\partial F_1$ has Alexander polynomial one. The construction in the proof of \cite[Theorem 1]{FL18} replaces $F_1$ by a locally flat disk and produces a connected planar surface $F\subset B^4$ with the same oriented boundary and with $\pi_1(B^4\setminus F)\cong\Z$. This is the form of the construction used for shakings in \cite[Section 8]{FMNOPR}.

Cap the $2k+1$ components with parallel core disks in the $2$-handle $D^2\times D^2$. Their orientations consist of $k+1$ positive and $k$ negative copies, so the resulting sphere represents a generator and meets the cocore in $2k+1$ points. We give the fundamental-group calculation explicitly. Let $b=2k+1$ and let $P$ be a disk with $b$ disjoint open subdisks removed. The part of the sphere exterior in the $2$-handle is $D^2\times P$, and its intersection with the exterior of $F$ is $S^1\times P$. Write
\[
    \pi_1(B^4\setminus F)=\langle\mu\rangle, \qquad \pi_1(P)=\langle a_1,\ldots,a_b\rangle.
\]
If $\varepsilon_i\in\{1,-1\}$ is the orientation of the $i$th core disk relative to the product orientation, then $\sum_{i=1}^b\varepsilon_i=1$. Choose $\mu$ to be a positively oriented meridian of $F$. The map from $S^1\times P$ to the exterior of $F$ sends $a_i$ to $\mu^{\varepsilon_i}$. The $S^1$-factor based near the outer boundary of $P$, denoted $\lambda$, follows the $n$-framed longitude of $K$ and has total linking number $n\sum_{i=1}^b\varepsilon_i=n$ with $\partial F$, so it maps to $\mu^n$. On the other side, in $D^2\times P$, $\lambda$ is null-homotopic and the $a_i$ freely generate the fundamental group. The Seifert-van Kampen theorem consequently gives
\[
    \pi_1(X_n(K)\setminus S) =\langle\mu,a_1,\ldots,a_b\mid a_i=\mu^{\varepsilon_i},\ \mu^n=1\rangle \cong\Z/n\Z.
\]
\end{proof}

Certificates are also compatible with connected sum. If $V$ and $V'$ admit certificates with $k$ and $k'$ pairs at the same framing, extend the vectors by zero in $V\oplus V'$. The resulting matrix $E$ is the direct sum of the original two matrices, so it is a certificate with $k+k'$ pairs for $V\oplus V'$. If both original certificates are isotropic, the combined certificate is isotropic as well, since the inverse intersection form is block diagonal.

\subsection{Lifting certificates through cabling}
For $m\ge 1$, define the $m\times m$ block matrix
\[
    \operatorname{cab}_m(V)_{ij}=
    \begin{cases}
        V,&i\le j,\\
        V^T,&i>j.
    \end{cases}
\]
This is a Seifert matrix of the $(m,1)$-cable of $K$, as in the satellite-surface construction in the proof of \cite[Proposition 2.4]{FMP22}. To see the block entries directly, order $m$ parallel copies $F_1,\ldots,F_m$ of a Seifert surface for $K$ in the positive normal direction and join them by the $m-1$ bands of the standard cable surface. If $x_a^{(i)}$ denotes the copy on $F_i$ of the $a$th basis curve, then
\[
    \operatorname{lk}\bigl(x_a^{(i)},(x_b^{(j)})^+\bigr)
    =
    \begin{cases}
        V_{ab},&i\le j,\\
        V_{ba},&i>j.
    \end{cases}
\]
For $i>j$, a negative push-off of the second curve can be exchanged for a positive push-off of the first curve. Symmetry of linking then gives $V_{ba}$. The connecting bands are disjoint from the basis curves, join distinct connected components, and introduce no additional first-homology generators. Thus the resulting surface has genus $mg$ and the displayed matrix has size $2mg$. Its antisymmetrization is the direct sum of $m$ copies of $\omega$.

\begin{theorem}\label{lifting}
    Let $u_l,v_l\in\Z^{2g}$, for $1\le l\le k$, satisfy
    \begin{equation}\label{eq:lifting-isotropy}
        v_b^T\omega^{-1}v_a=0\qquad(1\le a,b\le k).
    \end{equation}
    For $m,n'\ge 1$, set
    \[
        \widetilde u_l=(u_l,\ldots,u_l)^T, \qquad \widetilde v_l=(v_l,0,\ldots,0)^T,
    \]
    where each vector is a column with $m$ blocks. Let $E(t)$ be the matrix in \eqref{eq:certificate-matrix} for $V$ at framing $n'$ with vectors $u_l,v_l$. Let $\widetilde E(t)$ be its analogue for $\operatorname{cab}_m(V)$ at framing $mn'$ with vectors $\widetilde u_l,\widetilde v_l$. Then
    \begin{equation}\label{eq:lifting-identity}
        \det\widetilde E(t)=\det E(t^m).
    \end{equation}
    Thus an isotropic certificate for $(V,n')$ lifts to an isotropic certificate for $(\operatorname{cab}_m(V),mn')$ with the same number of pairs.
\end{theorem}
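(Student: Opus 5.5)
The approach is to compare $\widetilde E(t)$ and $E(t^m)$ as low-rank perturbations of their unperturbed parts, and to apply the matrix determinant lemma to each. Write $W=\operatorname{cab}_m(V)$, $P(t)=tV-V^T$ and $M_0(t)=tW-W^T$. In $M_0$ the diagonal and upper blocks are $P(t)$ and the lower blocks are $tV^T-V=-P(t)^T$.

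\textbf{Step 1: the unperturbed determinants agree.} I will first show that
\[
    \det M_0(t)=\det P(t^m).
\]
To do this, subtract block row $i+1$ from block row $i$ for $i<m$. Each such row then has a single nonzero block, namely $(t+1)\omega$, and the process continues by an explicit block elimination. As a consistency check, both sides equal $t^{mg}\Delta_K(t^m)$, because the Alexander polynomial of the $(m,1)$-cable is $\Delta_K(t^m)$.

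\textbf{Step 2: reduce to small Gram matrices.} Put $U=[u_1,\dots,u_k]$ and $Y=[v_1,\dots,v_k]$, with lifts $\widetilde U,\widetilde Y$. Then
\[
    \widetilde E=M_0+[\widetilde U\ \widetilde Y]
    \begin{pmatrix}0&c\\ c'&0\end{pmatrix}
    [\widetilde U\ \widetilde Y]^T,
\]
where $c=t^{mn'}-1$ and $c'=t^{1-mn'}(t^{mn'}-1)$. By the matrix determinant lemma, $\det\widetilde E/\det M_0$ is the determinant of a $2k\times 2k$ matrix. Its entries are built from the four Gram blocks
\[
    \widetilde U^TM_0^{-1}\widetilde U,\quad
    \widetilde U^TM_0^{-1}\widetilde Y,\quad
    \widetilde Y^TM_0^{-1}\widetilde U,\quad
    \widetilde Y^TM_0^{-1}\widetilde Y.
\]
Likewise, $\det E(s)/\det P(s)$ at $s=t^m$ is the same kind of determinant, now built from $U^TP(s)^{-1}U$, $U^TP(s)^{-1}Y$, and so on, with coefficients $s^{n'}-1$ and $s^{1-n'}(s^{n'}-1)$. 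Note that $s^{n'}-1=c$. However, $s^{1-n'}(s^{n'}-1)$ differs from $c'$ by the factor $t^{m-1}$.

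\textbf{Step 3: solve the linear systems (the main computation).} I will compute two block vectors explicitly. The first is $x=M_0^{-1}(e_1\otimes v)$. The second is $y=M_0^{-1}(\mathbf 1\otimes u)$, or equivalently its transpose-side analogue $\mathbf 1\otimes u$ paired against $M_0^{-T}$.

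To find them, use the same telescoping rows as in Step 1. The differenced equations force consecutive blocks to be related by multiplication by an explicit rational function times $\omega^{-1}P$ (or its transpose). Substituting into the last row should then produce a single equation governed by $P(t^m)$. I expect the following shapes:
\[
    \widetilde U^TM_0^{-1}\widetilde U=(\text{scalar})\,U^TP(t^m)^{-1}U,
    \qquad
    \widetilde U^TM_0^{-1}\widetilde Y=t^{\,j}\,U^TP(t^m)^{-1}Y,
\]
together with a similar formula for $\widetilde Y^TM_0^{-1}\widetilde U$, for suitable exponents $j$. For the remaining block, I expect $\widetilde Y^TM_0^{-1}\widetilde Y$ to be a combination of $Y^TP(t^m)^{-1}Y$ and a correction term $\phi(t)\,Y^T\omega^{-1}Y$. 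The hypothesis \eqref{eq:lifting-isotropy} kills this correction term. This is the one place where isotropy is used.

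Since only the off-diagonal blocks of the coefficient matrix are nonzero, the determinant in Step 2 involves $U^T(\cdot)U$ and $Y^T(\cdot)Y$ only through products with the mixed blocks. The monomial discrepancies $t^{j}$ and $t^{m-1}$ should therefore cancel in pairs, since only the products $c\,c'$ matter. Comparing the two $2k\times 2k$ determinants then gives \eqref{eq:lifting-identity}.

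\textbf{Step 4: conclude.} Isotropy of the lifted certificate is immediate. The antisymmetrization of $W$ is $\omega^{\oplus m}$, and $\widetilde v_l$ is supported in the first block. Hence
\[
    \widetilde v_b^T(\omega^{\oplus m})^{-1}\widetilde v_a=v_b^T\omega^{-1}v_a=0.
\]
A monomial $\det E(s)$ gives a monomial $\det E(t^m)$, so the lifted vectors form an isotropic certificate with the same number of pairs.

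I expect the main obstacle to be Step 3, namely getting the exact monomial factors and the $\omega^{-1}$ correction in $\widetilde Y^TM_0^{-1}\widetilde Y$. If the inversion becomes unwieldy, the fallback is to avoid inverses. In that approach, I would apply the telescoping row operations, together with the matching column operations $C_j-C_{j-1}$, directly to $\widetilde E$. The goal is to reduce it to a block-triangular form whose only nontrivial diagonal block is $E(t^m)$ up to unit factors. In that version, isotropy is used to clear the residual $v u^T$ entries.
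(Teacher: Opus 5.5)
Your architecture matches the paper's: the matrix determinant lemma, Gram blocks computed by telescoping, isotropy killing an $\omega^{-1}$ term, and a diagonal rescaling. However, the proposal starts from an incorrect description of $M_0$ and leaves the decisive computation undone.

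You took a blockwise transpose instead of the full transpose. The blocks of $M_0=t\operatorname{cab}_m(V)-\operatorname{cab}_m(V)^T$ are $tV-V^T$ on the diagonal, $(t-1)V$ above it and $(t-1)V^T$ below it. Subtracting consecutive block rows therefore gives $(M_0x)_i-(M_0x)_{i+1}=\omega(tx_i-x_{i+1})$, which has two nonzero blocks, not a single block $(t+1)\omega$. With the matrix you wrote down, the elimination would give $(t+1)^{2g(m-1)}\det(tV-V^T)$, which is not $\det P(t^m)$. For $V=\begin{psmallmatrix}0&1\\0&0\end{psmallmatrix}$ and $m=2$ it gives $t(t+1)^2\neq t^2$. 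The identity you need is the weighted telescoping $\sum_i t^{m-i}(M_0x)_i=P(t^m)\sum_j x_j$. It produces $P(t^m)$ both in $\det M_0$ and in $M_0^{-1}$. Your Alexander-polynomial check does independently give $\det M_0=\det P(t^m)$, but the wrong block structure would also derail Step 3.

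Step 3 carries the whole theorem, and there you only predict shapes. Solve $M_0x=y$ via $\sum_j x_j=P(t^m)^{-1}\sum_i t^{m-i}y_i$ and $x_1=\omega^{-1}\bigl(y_1-(t-1)V\sum_j x_j\bigr)$, and use $\omega^{-1}VP(t^m)^{-1}=(\omega^{-1}-P(t^m)^{-1})/(t^m-1)$. This gives
\[
\widetilde Y^TM_0^{-1}\widetilde U=Y^TP(t^m)^{-1}U,\quad \widetilde U^TM_0^{-1}\widetilde U=[m]_t\,U^TP(t^m)^{-1}U,\quad \widetilde U^TM_0^{-1}\widetilde Y=t^{m-1}U^TP(t^m)^{-1}Y,
\]
together with
\[
\widetilde Y^TM_0^{-1}\widetilde Y=\frac{t^{m-1}}{[m]_t}Y^TP(t^m)^{-1}Y+\Bigl(1-\frac{t^{m-1}}{[m]_t}\Bigr)Y^T\omega^{-1}Y.
\]

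Your cancellation heuristic also misidentifies what must match. After multiplying by $\begin{psmallmatrix}0&c\\ c'&0\end{psmallmatrix}$, the \emph{mixed} Gram blocks sit on the diagonal of the $2k\times 2k$ matrix, as $c\,\widetilde Y^TM_0^{-1}\widetilde U$ and $c'\,\widetilde U^TM_0^{-1}\widetilde Y$. So they do not enter only through the product $cc'$. The comparison with $E(t^m)$ requires the first to carry no extra factor and the second to carry exactly $t^{m-1}$. It is this $t^{m-1}$ that absorbs the discrepancy $s^{1-n'}(s^{n'}-1)=t^{m-1}c'$.

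Only the pure blocks are off the diagonal. There the factors $[m]_t$ and $t^{m-1}/[m]_t$ have product $t^{m-1}$, matching the base case, and they are removed by a block-diagonal scalar conjugation (the paper's $\Lambda$). With these four formulas, with their exact factors, your Steps 2 and 4 go through. Without them the proposal does not establish the identity.
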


\begin{proof}
All intermediate matrix calculations take place over $\Q(t)$. Put
\[
    s=t^m,\qquad q=[m]_t=1+t+\cdots+t^{m-1},\qquad B=sV-V^T,\qquad W=t\operatorname{cab}_m(V)-\operatorname{cab}_m(V)^T.
\]
In particular, $B=\omega+(s-1)V$ and $s-1=(t-1)q$.

For a block column $x=(x_1,\ldots,x_m)^T$, write
\[
    S_i=\sum_{j<i}x_j,\qquad T=\sum_{j=1}^m x_j=S_{m+1}.
\]
The $i$th block row of $Wx$ is
\begin{equation}\label{eq:cable-row}
    (Wx)_i=\omega(S_{i+1}-tS_i)+(t-1)VT.
\end{equation}
Multiplying the $i$th row by $t^{m-i}$ and summing gives the telescoping identity
\begin{equation}\label{eq:cable-sum}
    \sum_{i=1}^m t^{m-i}(Wx)_i=BT.
\end{equation}

First, we determine $\det W$. The change of variables from $(x_1,\ldots,x_m)$ to $(S_2,\ldots,S_{m+1})$ has determinant one. Replace the last block row by the weighted sum in \eqref{eq:cable-sum}. Its coefficient on the former last row is one, so this operation also preserves the determinant. The new last row has only the final block $B$. The submatrix obtained by deleting its last block row and column is lower block bidiagonal, with diagonal blocks $\omega$ and subdiagonal blocks $-t\omega$. Therefore
\begin{equation}\label{eq:cable-determinant}
    \det W=(\det\omega)^{m-1}\det B=\det B.
\end{equation}
Because $B|_{t=1}=\omega$, both $B$ and $W$ are invertible over $\Q(t)$.

We next calculate the pairings needed for the matrix determinant lemma. If $Wx=y$, equations \eqref{eq:cable-row} and \eqref{eq:cable-sum} give
\[
    T=B^{-1}\sum_{i=1}^m t^{m-i}y_i, \qquad x_1=\omega^{-1}\bigl(y_1-(t-1)VT\bigr).
\]
For $y=(u,\ldots,u)^T$, these become
\[
    T=qB^{-1}u,\qquad x_1=B^{-1}u.
\]
For $y=(v,0,\ldots,0)^T$, they become
\[
    T=t^{m-1}B^{-1}v, \qquad x_1=\omega^{-1}v- \frac{t^{m-1}}q(\omega^{-1}-B^{-1})v,
\]
where we used $\omega^{-1}VB^{-1}=(\omega^{-1}-B^{-1})/(s-1)$. Consequently,
\begin{align*}
    \widetilde v_b^TW^{-1}\widetilde u_a
        &=v_b^TB^{-1}u_a,\\
    \widetilde u_b^TW^{-1}\widetilde u_a
        &=q\,u_b^TB^{-1}u_a,\\
    \widetilde u_b^TW^{-1}\widetilde v_a
        &=t^{m-1}u_b^TB^{-1}v_a,\\
    \widetilde v_b^TW^{-1}\widetilde v_a
        &=\left(1-\frac{t^{m-1}}q\right)v_b^T\omega^{-1}v_a
          +\frac{t^{m-1}}qv_b^TB^{-1}v_a
          =\frac{t^{m-1}}qv_b^TB^{-1}v_a.
\end{align*}
The last equality is the only use of \eqref{eq:lifting-isotropy} in the determinant calculation.

Set
\[
    c_1=t^{mn'}-1=s^{n'}-1,\qquad c_2=(t^{mn'}-1)t^{1-mn'},\qquad c_2'=(s^{n'}-1)s^{1-n'}=t^{m-1}c_2.
\]
With
\[
    \begin{aligned}
        P&=[c_1\widetilde u_1\mid\cdots\mid c_1\widetilde u_k
        \mid c_2\widetilde v_1\mid\cdots\mid c_2\widetilde v_k],\\
        Q&=[\widetilde v_1\mid\cdots\mid\widetilde v_k
        \mid\widetilde u_1\mid\cdots\mid\widetilde u_k],
    \end{aligned}
\]
we have $\widetilde E=W+PQ^T$. Let $U=[u_1\mid\cdots\mid u_k]$ and $T_v=[v_1\mid\cdots\mid v_k]$, and define
\[
    G=
    \begin{pmatrix}
        c_1T_v^TB^{-1}U&c_2'T_v^TB^{-1}T_v\\
        c_1U^TB^{-1}U&c_2'U^TB^{-1}T_v
    \end{pmatrix},
    \qquad
    \Lambda=\begin{pmatrix}I_k&0\\0&qI_k\end{pmatrix}.
\]
The four pairing identities above say exactly that $Q^TW^{-1}P=\Lambda G\Lambda^{-1}$. On the other hand,
\[
    E(s)=B+c_1UT_v^T+c_2'T_vU^T,
\]
and $G$ is its matrix in the determinant lemma. Applying that lemma twice and using \eqref{eq:cable-determinant}, we obtain
\[
    \begin{aligned}
        \det\widetilde E(t)
        &=\det W\det(I_{2k}+\Lambda G\Lambda^{-1})\\
        &=\det B\det(I_{2k}+G)
        =\det E(s)
        =\det E(t^m).
    \end{aligned}
\]
For $k=0$, the same argument reduces to \eqref{eq:cable-determinant}, with empty matrices in the determinant lemma.

Finally, since the antisymmetrization of $\operatorname{cab}_m(V)$ is the direct sum of $m$ copies of $\omega$ and each $\widetilde v_l$ is supported in its first block, we have
\[
    \widetilde v_b^T\bigl(\operatorname{cab}_m(V)-\operatorname{cab}_m(V)^T\bigr)^{-1}\widetilde v_a =v_b^T\omega^{-1}v_a=0.
\]
Thus the lifted data remain isotropic.
\end{proof}

\section{Cables and exact shaking numbers}\label{sec:cables}

We first construct the isotropic data needed to apply Theorem \ref{lifting}. The construction is an explicit matrix version of the vanishing-Arf argument underlying \cite[Corollary 8.5]{FMNOPR}.

\begin{lemma}\label{arf-certificate}
    Let $J$ be a knot with $\Arf(J)=0$, and put $h=\galg(J)$. There is a Seifert matrix $V$ of size $2G$, with $G=d+h$ for some $d\ge 0$, in an integral basis
    \[
        x_1,\dots,x_{2d},a_1,\dots,a_h,b_1,\dots,b_h,
    \]
    such that its restriction $P$ to the $x$-coordinates is Alexander-trivial, the $x$-subgroup is orthogonal to the $(a,b)$-subgroup for $\omega=V-V^T$, the $(a,b)$-basis is symplectic with $\omega(a_i,b_j)=\delta_{ij}$, and every $V(a_i,a_i)$ is even. For a column vector $u$, write $u(y)$ for its coordinate at the basis element $y$. Set $v_i=e_{a_i}$ and define $u_i$ by
    \[
        u_i(x_j)=-V(a_i,x_j),\qquad
        u_i(b_j)=-V(a_i,b_j),\qquad
        u_i(a_j)=
        \begin{cases}
        -V(a_i,a_j),&j>i,\\
        -V(a_i,a_i)/2,&j=i,\\
        0,&j<i.
        \end{cases}
    \]
    Then
    \[
        v_i^T\omega^{-1}v_j=0\quad\text{for all }i,j, \qquad \det\left(tV-V^T+(t-1)\sum_{i=1}^h (u_iv_i^T+v_iu_i^T)\right)=t^G.
    \]
    Thus these vectors give an isotropic certificate at framing $1$ with $h$ pairs.
\end{lemma}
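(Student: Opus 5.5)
The proof has two parts: constructing a Seifert matrix with the stated normal form, and checking the two identities by direct matrix manipulation.

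\emph{Existence of the normal form.} By the definition of the algebraic genus \cite{FL18,FL19}, $J$ has a Seifert surface of some genus $G$ with a genus-$d$ subsurface whose Seifert form $P$ is Alexander-trivial, where $G-d=h=\galg(J)$. We split $H_1$ $\omega$-orthogonally as the $x$-subgroup of rank $2d$ plus its $\omega$-orthogonal complement. This splitting exists because $\omega|_{x}=P-P^T$ is unimodular: $P$ is Alexander-trivial, so $\det(P-P^T)=\pm1$. The complement is unimodular symplectic of rank $2h$. The quadratic form $y\mapsto V(y,y)\bmod 2$ is a quadratic refinement of $\omega\bmod 2$. Its Arf invariant on the $x$-part is zero, since $P$ is Alexander-trivial and hence $\Delta\equiv 1$. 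By additivity, its Arf invariant on the complement equals $\Arf(J)=0$. Over $\mathbb{F}_2$ we can therefore choose a symplectic basis of the complement in which every $a_i$ has $q(a_i)=0$. Since $\mathrm{Sp}_{2h}(\Z)\to\mathrm{Sp}_{2h}(\mathbb{F}_2)$ is surjective, this lifts to an integral symplectic basis with every $V(a_i,a_i)$ even. I expect this to be the main obstacle. It is the one place where $\Arf(J)=0$ enters, and some care is needed with the orthogonal splitting and the surjectivity of the reduction map.

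\emph{Isotropy.} In the basis $(x,a,b)$, $\omega=\omega_x\oplus\begin{psmallmatrix}0&I\\-I&0\end{psmallmatrix}$. The inverse of the second block is $\begin{psmallmatrix}0&-I\\ I&0\end{psmallmatrix}$, whose $a$–$a$ block vanishes. Hence $e_{a_i}^T\omega^{-1}e_{a_j}=0$.

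\emph{Determinant.} The added term $\sum_i(u_iv_i^T+v_iu_i^T)$ is symmetric. So $E(t)=tV'-V'^T$ with $V'=V+\sum_i(u_iv_i^T+v_iu_i^T)$, and $V'-V'^T=\omega$. We check from the definition of $u_i$ that the row $V'(a_i,\cdot)$ vanishes identically.
\begin{itemize}[label=\textbullet]
\item At $y=x_j$ and $y=b_j$, the entry $V(a_i,y)$ is cancelled by $u_i(y)$.
\item At $a_j$ with $j>i$, the entry $V(a_i,a_j)$ is cancelled by $u_i(a_j)=-V(a_i,a_j)$, while the cross term $u_j(a_i)$ is $0$.
\item At $a_i$, two halves of $-V(a_i,a_i)/2$ appear, one from each summand, and they cancel $V(a_i,a_i)$.
\item At $a_j$ with $j<i$, the cross term $u_j(a_i)=-V(a_j,a_i)$ appears, and $V(a_i,a_j)-V(a_j,a_i)=\omega(a_i,a_j)=0$.
\end{itemize}
Therefore $V'(\cdot,a_i)=\omega(\cdot,a_i)$, which is supported only at $b_i$, where it equals $-1$.

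Consequently, in $tV'-V'^T$ the $a$-rows are $-V'(\cdot,a_i)^T$ and the $a$-columns are $tV'(\cdot,a_i)$. Both are supported only on the $b$-coordinates, as $\pm$ and $\mp t$ times an identity, and the $a$–$a$ block is zero. Block elimination removes the $a$ and $b$ rows and columns together, contributing $\pm t^h$, and leaves the $x$–$x$ block. That block is $tP-P^T$ up to corrections from the $b$ entries, and those corrections are killed by the same elimination. Its determinant is $\pm t^d$ because $P$ is Alexander-trivial. Thus $\det E(t)=\pm t^{d+h}=\pm t^G$. Setting $t=1$ gives $E(1)=\omega$ with $\det\omega=1$, so the sign is $+$. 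Together with isotropy, this shows the $h$ pairs form an isotropic certificate at framing $1$.
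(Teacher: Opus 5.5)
Your proposal is correct and follows the paper's proof essentially step for step: the $\omega$-orthogonal splitting from unimodularity of $P-P^T$, Arf additivity to make every $V(a_i,a_i)$ even, the entry-by-entry check that the $a$-rows of $V'$ vanish, and elimination along the $a$-rows, each of which has a single entry $1$ in the $b_i$ column. That elimination leaves a block-triangular matrix with diagonal blocks exactly $tP-P^T$ and $-tI_h$, so your hedge about ``corrections'' is unnecessary. The only differences are cosmetic: you lift the $\mathbb{F}_2$ symplectic basis by citing surjectivity of $\mathrm{Sp}_{2h}(\Z)\to\mathrm{Sp}_{2h}(\mathbb{F}_2)$ where the paper gives an explicit inductive lift, and you fix the sign via $E(1)=\omega$ rather than via the normalization $\det(tP-P^T)=t^d$.
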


\begin{proof}
By the definition of algebraic genus \cite{FL18}, there is a genus-$G$ Seifert surface $F$ for $J$ and an Alexander-trivial subgroup $L\subset H_1(F;\Z)$ of rank $2d$, with $G-d=h$. The restriction of the intersection form $\omega$ to $L$ is unimodular: its determinant is obtained by evaluating the Alexander determinant at $t=1$. Consequently there is an integral orthogonal decomposition
\[
    H_1(F;\Z)=L\oplus L^{\perp_\omega}.
\]
Indeed, for any homology class, unimodularity gives a unique element of $L$ with the same intersection pairings against $L$. Subtracting that element gives its component in $L^{\perp_\omega}$. The complementary summand has rank $2h$ and unimodular intersection form.

The quadratic refinement
\[
    q\colon H_1(F;\mathbb F_2)\longrightarrow\mathbb F_2, \qquad q(x)=V(x,x)\bmod 2,
\]
splits as an orthogonal sum over these two summands. The restriction to $L$ has Arf invariant zero because its Seifert form $P$ has normalized Alexander polynomial one. Since $\Arf(J)=0$, additivity of the Arf invariant shows that the restriction to $L^{\perp_\omega}$ also has Arf invariant zero. 
Choose a symplectic basis of this latter quadratic space over $\mathbb F_2$ whose first $h$ elements have $q$-value zero. We lift this basis to an integral symplectic basis of $M=L^{\perp_\omega}$ by induction on $h$. 
For $h>0$, choose a primitive integral lift $a$ of the first basis vector and any integral lift $b_0$ of its symplectic partner. A primitive lift exists by taking coordinates in $\{0,1\}$ in an integral basis of $M$. Since $\omega$ is unimodular, there is a vector $c\in M$ with $\omega(a,c)=1$. The integer $\omega(a,b_0)$ is odd, so
\[
    b=b_0+\bigl(1-\omega(a,b_0)\bigr)c
\]
has the same reduction as $b_0$ and satisfies $\omega(a,b)=1$. Hence
\[
    M=\langle a,b\rangle\oplus\langle a,b\rangle^{\perp_\omega}.
\]
Reducing this decomposition modulo $2$ identifies the second summand with the orthogonal complement of the chosen pair. Induction on $h$ therefore lifts the remaining pairs. The resulting integral symplectic basis $a_i,b_i$ has the prescribed reductions, so every $V(a_i,a_i)$ is even. Together with a basis $x_j$ of $L$, this gives the asserted basis. In particular, all the vectors in the statement are integral.

Set
\[
    S=\sum_{i=1}^h(u_iv_i^T+v_iu_i^T),\qquad V'=V+S.
\]
For each $i,j$, the definition gives $S(a_i,x_j)=-V(a_i,x_j)$ and $S(a_i,b_j)=-V(a_i,b_j)$. For $i\neq j$, it also gives $S(a_i,a_j)=-V(a_i,a_j)$, because $V(a_i,a_j)=V(a_j,a_i)$. Finally, $S(a_i,a_i)=-V(a_i,a_i)$. Hence all the $a$-rows of $V'$ vanish. The $x$-block is unchanged. Since $S$ is symmetric and $V'-V'^T=\omega$, the matrix in the ordered basis $(x,a,b)$ has the form
\[
    V'=
    \begin{pmatrix}
        P&0&B\\
        0&0&0\\
        B^T&-I_h&D
    \end{pmatrix},
    \qquad D=D^T.
\]
It follows that
\[
    tV'-V'^T=
    \begin{pmatrix}
        tP-P^T&0&(t-1)B\\
        0&0&I_h\\
        (t-1)B^T&-tI_h&(t-1)D
    \end{pmatrix}.
\]
Expanding along the $a$-rows and then the $a$-columns gives
\[
    \det(tV'-V'^T)=t^h\det(tP-P^T)=t^{h+d}=t^G.
\]
Here Alexander-triviality gives $\det(tP-P^T)=t^d$ with the usual Seifert-matrix normalization. Finally, $\omega$ is block diagonal with respect to $L\oplus L^{\perp_\omega}$, and the $(a,a)$-block of its inverse is zero. This proves isotropy. Empty summands and matrices are allowed, with the convention that the empty determinant is $1$.
\end{proof}

\begin{corollary}\label{arf-cables}
    Let $n\ge 1$ and let $J$ be a knot with $\Arf(J)=0$. Then $C_{n,1}(J)$ admits a generating sphere in its $n$-trace whose complement has fundamental group $\Z/n\Z$ and which meets the cocore in exactly $2\galg(J)+1$ points. In particular,
    \[
        \shn_n(C_{n,1}(J))\le 2\galg(J)+1.
    \]
    At framing $-n$, the same intersection count and complement group are realized for $\overline{C_{n,1}(J)}$.
\end{corollary}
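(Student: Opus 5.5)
The plan is to chain Lemma \ref{arf-certificate}, Theorem \ref{lifting}, and Corollary \ref{geometric-certificate}, and then obtain the mirror statement from Proposition \ref{profile-structure}(d).

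First, I apply Lemma \ref{arf-certificate} to $J$. This gives a Seifert matrix $V$ of size $2G$ together with vectors $u_i,v_i$ for $1\le i\le h=\galg(J)$. These vectors form an isotropic certificate for $(V,1)$: we have $v_b^T\omega^{-1}v_a=0$, and $\det E_{V,1}(t)=t^G$.

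Second, I apply Theorem \ref{lifting} with $m=n$ and $n'=1$. The lifted vectors $\widetilde u_l,\widetilde v_l$ for $\operatorname{cab}_n(V)$ at framing $n$ then satisfy
\[
    \det\widetilde E(t)=\det E(t^n)=t^{nG}.
\]
This is a Laurent monomial with coefficient $+1$, so the lifted vectors form a certificate for $(\operatorname{cab}_n(V),n)$ with $h$ pairs.

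Third, I invoke the fact, recorded before Theorem \ref{lifting}, that $\operatorname{cab}_n(V)$ is a Seifert matrix of $C_{n,1}(J)$. The one point to check is that it is realized by an actual Seifert surface of $C_{n,1}(J)$, namely the standard cable surface with the given basis curves. This is exactly what Corollary \ref{geometric-certificate} uses, through \cite[Proposition 9]{FL18} applied to that surface. Corollary \ref{geometric-certificate} then produces a locally flat generating sphere in $X_n(C_{n,1}(J))$ meeting the cocore in $2h+1$ points, with complement group $\Z/n\Z$. The bound on $\shn_n$ follows immediately.

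For the mirror, the orientation-reversing homeomorphism $-X_n(K)\cong X_{-n}(\overline K)$ in the proof of Proposition \ref{profile-structure}(d) carries cocore to cocore and spheres to spheres. It therefore preserves the intersection count. It also preserves the complement, up to homeomorphism, so the complement group is again $\Z/n\Z$.

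I expect the main obstacle to be bookkeeping rather than a real difficulty. The identification of $\operatorname{cab}_n(V)$ with a Seifert matrix of a genuine Seifert surface of the cable must use the same positive framing and orientation conventions as the construction of the shaking matrix $\theta$. This is needed so that Theorem \ref{det-formula} and Corollary \ref{geometric-certificate} apply verbatim.
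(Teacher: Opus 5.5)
Your proposal is correct and matches the paper's proof: Lemma \ref{arf-certificate} gives an isotropic framing-one certificate with $\galg(J)$ pairs. Theorem \ref{lifting} with $m=n$, $n'=1$ lifts it to a certificate for $\operatorname{cab}_n(V)$ with determinant $t^{nG}$, and Corollary \ref{geometric-certificate} produces the sphere; the mirror statement follows, as in the paper, by reflecting the trace, which preserves cocore intersections and the complement group.
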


\begin{proof}
Apply Lemma \ref{arf-certificate}, and then Theorem \ref{lifting} with $m=n$ and $n'=1$. The lifted data for the standard Seifert matrix of $C_{n,1}(J)$ have $\galg(J)$ pairs and monomial determinant. The geometric criterion \ref{geometric-certificate} produces the required sphere. The negative-framing statement follows by reflecting the trace, which preserves the number of cocore intersections and the complement group.
\end{proof}

Corollary \ref{arf-cables} proves Theorem \ref{main-cables}.

The cyclic shake-sliceness assertion in Corollary \ref{arf-cables} is already contained in \cite[Corollary 2.1]{FMNOPR}. The additional conclusion is the specified number of cocore intersections.

\subsection{Realization of all odd shaking numbers}
Fix the handedness of $5_2$ for which a genus-one Seifert matrix is
\[
    V_0=\begin{pmatrix}-1&1\\0&-2\end{pmatrix}.
\]
Its normalized Alexander polynomial and ordinary signature are
\[
    \Delta_{5_2}(t)=2t-3+2t^{-1},\qquad \sigma_{5_2}(-1)=-2.
\]
Indeed, $V_0+V_0^T$ is negative definite. Moreover, $\Delta_{5_2}(-1)=-7\equiv1\pmod8$, so $\Arf(5_2)=0$.

\begin{lemma}\label{five-two}
    For $V_0$, the vectors $u=v=(0,1)^T$ give a framing-$1$ certificate with one pair:
    \[
        tV_0-V_0^T+(t-1)(uv^T+vu^T)
        =
        \begin{pmatrix}
            1-t&t\\
            -1&0
        \end{pmatrix},
        \qquad \det E(t)=t.
    \]
\end{lemma}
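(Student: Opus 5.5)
The plan is a direct computation from the definition of $E(t)$ at framing $n=1$ with one pair. In that case $(t^n-1)(uv^T+t^{1-n}vu^T)$ reduces to $(t-1)(uv^T+vu^T)$, which matches the displayed matrix. With $u=v=(0,1)^T$, the matrix $uv^T+vu^T$ has a single nonzero entry, equal to $2$, in position $(2,2)$. So the perturbation term is $\begin{psmallmatrix}0&0\\0&2(t-1)\end{psmallmatrix}$.

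Next I compute $tV_0-V_0^T$. We have $tV_0=\begin{psmallmatrix}-t&t\\0&-2t\end{psmallmatrix}$ and $V_0^T=\begin{psmallmatrix}-1&0\\1&-2\end{psmallmatrix}$. Their difference is $\begin{psmallmatrix}1-t&t\\-1&2-2t\end{psmallmatrix}$. Adding $2(t-1)$ in the $(2,2)$ entry cancels $2-2t$, which gives exactly the displayed matrix $\begin{psmallmatrix}1-t&t\\-1&0\end{psmallmatrix}$.

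Its determinant is $(1-t)\cdot 0-t\cdot(-1)=t$. This is a Laurent monomial with coefficient $+1$, so the pair is a certificate for $(V_0,1)$ by definition. As a consistency check, this agrees with the forced value $\det E(t)=t^g$ with $g=1$.

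I would also note, for later use, that this certificate is consistent with Lemma \ref{arf-certificate}. Here $\omega=V_0-V_0^T=\begin{psmallmatrix}0&1\\-1&0\end{psmallmatrix}$, and $v^T\omega^{-1}v=0$ holds automatically for a single vector because $\omega^{-1}$ is alternating. So the certificate is isotropic, and Theorem \ref{lifting} applies to it. There is no real obstacle: the only point needing care is the sign and transpose conventions in $tV_0-V_0^T$, which I would double-check entrywise.
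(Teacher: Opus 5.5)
Your proposal is correct and matches the paper's proof, which also verifies the displayed matrix by direct multiplication: the $(2,2)$ entry $2-2t$ of $tV_0-V_0^T$ cancels against $2(t-1)$, giving determinant $t$. The paper likewise notes that isotropy of the single vector $v$ is automatic because $(V_0-V_0^T)^{-1}$ is skew-symmetric.
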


\begin{proof}
The displayed identity follows by direct multiplication. The isotropy condition for the single vector $v$ is automatic because $(V_0-V_0^T)^{-1}$ is skew-symmetric.
\end{proof}

\begin{proof}[Proof of Theorem \ref{realization}]
It suffices to prove the result for $n>0$, since reflection identifies the profiles at opposite framings of mirror knots. Put $J_k=\#^k5_2$ and $K_{n,k}=C_{n,1}(J_k)$, with $J_0$ the unknot. The direct sum of $k$ copies of the certificate in Lemma \ref{five-two} has determinant $t^k$. Its $v$-vectors are isotropic because the intersection matrix is block diagonal. Theorem \ref{lifting} and the criterion \ref{geometric-certificate} therefore produce a generating sphere in $X_n(K_{n,k})$ with complement group $\Z/n\Z$ and exactly $2k+1$ cocore intersections. In particular,
\[
    (0,k)\in\cP_n(K_{n,k}).
\]

Write $n=2^a q$ with $a\ge 0$ and $q$ a positive odd integer, and set
\[
    \xi=\exp\left(\frac{2\pi i}{2^{a+1}}\right).
\]
Then $\xi$ has prime-power order $2^{a+1}$ and satisfies $\xi^n=(-1)^q=-1$. Litherland's satellite formula \cite[Theorem 2]{Litherland} gives
\[
    \sigma_{K_{n,k}}(\xi)=\sigma_{J_k}(\xi^n)=-2k,
\]
since the pattern knot $T_{n,1}$ is the unknot. The prime-power root $\xi$ satisfies the admissibility condition of \cite[Proposition 2.17]{CNT}. Applying the topological Murasugi-Tristram inequality \cite[Corollary 3.15]{CNT} directly at $\xi$ gives
\[
    \gfour(K_{n,k})\ge \frac12\lvert\sigma_{K_{n,k}}(\xi)\rvert=k.
\]
Proposition \ref{profile-structure}, applied to $(0,k)$, gives the opposite inequality. Consequently $\gfour(K_{n,k})=k$, and every realized pair $(g,r)$ satisfies $g+r\ge k$.

Conversely, if $0\le r\le k$, applying the tubing move of Proposition \ref{profile-structure} exactly $k-r$ times to $(0,k)$ realizes $(k-r,r)$. Increasing genus realizes every $(g,r)$ with $g+r\ge k$ in this range. If $r\ge k$, finger moves applied to $(0,k)$ realize $(0,r)$, after which genus can again be increased. Therefore
\[
    \cP_n(K_{n,k}) =\{(g,r)\in\Z_{\ge 0}^2 \mid g+r\ge k\}, \qquad \shn_n(K_{n,k})=2k+1.
\]
The initially constructed sphere attains this minimum and has cyclic complement, as required.
\end{proof}

\subsection{Shaking numbers of cables of torus knots}
Feller-Miller-Nagel-Orson-Powell-Ray proved that the cables in the following proposition are topologically shake slice but not smoothly shake slice \cite[Corollary 2.3]{FMNOPR}. The cable bound above determines their topological shaking numbers exactly.

\begin{proposition}\label{top-smooth}
    Let $n,j\ge 1$, and set
    \[
        J=T_{2,8j+1},\qquad K=C_{n,1}(J),
    \]
    where $J$ is the positive torus knot. Then $K$ is $\Z/n\Z$-shake slice and
    \[
        \shn_n(K)=8j+1, \qquad \gfour(K)=4j.
    \]
    However, $K$ is not smoothly $n$-shake slice. The mirror knot has the corresponding properties at framing $-n$.
\end{proposition}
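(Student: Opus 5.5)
The plan follows the template of the proof of Theorem \ref{realization}: an upper bound from the cable certificate and a matching lower bound from a Levine-Tristram signature at a prime-power root of unity.

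\emph{Upper bound.} We need $\Arf(J)=0$ and $\galg(J)\le 4j$. Since $\Delta_{T_{2,8j+1}}(-1)=\pm(8j+1)\equiv\pm1\pmod 8$, and the sign is fixed by normalization, the Arf invariant vanishes. The Seifert genus of $T_{2,8j+1}$ is $4j$, so $\galg(J)\le g_3(J)=4j$. Corollary \ref{arf-cables} (equivalently Theorem \ref{main-cables}) then produces a generating sphere in $X_n(K)$ with complement group $\Z/n\Z$ and at most $2\cdot 4j+1=8j+1$ cocore intersections, so $\shn_n(K)\le 8j+1$. Together with Proposition \ref{profile-structure}(c), this also gives $\gfour(K)\le 4j$, since $(0,4j)\in\cP_n(K)$.

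\emph{Lower bound.} Write $n=2^aq$ with $q$ odd and take $\xi=\exp(2\pi i/2^{a+1})$, so that $\xi^n=-1$. Litherland's formula gives $\sigma_K(\xi)=\sigma_J(-1)$. The signature of $T_{2,8j+1}$ is $-8j$, since it has a positive-definite or negative-definite symmetrized Seifert form of rank $8j$. The topological Murasugi-Tristram inequality \cite[Corollary 3.15]{CNT} at the admissible prime-power root $\xi$ then gives $\gfour(K)\ge 4j$. Hence $\gfour(K)=4j$, and Proposition \ref{profile-structure}(c) gives $\shn_n(K)\ge 8j+1$. Combining both bounds yields $\shn_n(K)=8j+1$, and the sphere from the upper bound gives $\Z/n\Z$-shake sliceness, since a genus-zero generating surface with cyclic complement exists. (An $(n,1)$-cable with a different sign convention would change only the sign of the signature, which is harmless because the inequality uses $|\sigma|$.)

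\emph{Smooth statement and mirror.} The failure of smooth $n$-shake sliceness is exactly \cite[Corollary 2.3]{FMNOPR}, which I will cite; its proof uses a smooth obstruction such as Rasmussen or $\tau$-type bounds adapted to shakings. For the mirror, I apply Proposition \ref{profile-structure}(d), $\cP_{-n}(\bar K)=\cP_n(K)$, together with the reflection statement in Corollary \ref{arf-cables}. Reflection is orientation-reversing and preserves smoothness, so the smooth statement transfers as well.

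The main obstacle is confirming the sign and normalization conventions: that the handedness of the positive $T_{2,8j+1}$ and the orientation of the $(n,1)$-cable match the conventions of Corollary \ref{arf-cables} and Litherland's formula, and that $8j+1$ is the right residue for the Arf invariant. This is why the statement uses $8j+1$ rather than an arbitrary odd number: for $T_{2,2m+1}$ the Arf invariant vanishes iff $m\equiv 0,3\pmod 4$.
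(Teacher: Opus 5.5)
Your proposal is correct and matches the paper's proof. The topological part follows the same steps: $\Arf(J)=0$ from the determinant $8j+1$, Corollary \ref{arf-cables} for the upper bound, and Litherland's formula with the topological Murasugi--Tristram inequality at $\xi=e^{2\pi i/2^{a+1}}$, combined with Proposition \ref{profile-structure}(c), for the matching lower bound. Citing \cite[Corollary 2.3]{FMNOPR} for the smooth statement is legitimate, since the paper credits it there too. However, your description of that obstruction is wrong. It is not a Rasmussen or $\tau$-type bound. It is a Heegaard Floer $d$-invariant argument, which the paper recalls: the sphere exterior is an integral homology cobordism from $S^3_n(K)\cong L(n,1)\mathbin{\#}S^3_{1/n}(J)$ to $L(n,1)$, which forces $d(S^3_{1/n}(J))=0$. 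On the other hand, $d(S^3_{1/n}(J))=-2V_0(J)=-4j\neq 0$.
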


\begin{proof}
The knot $J$ has Seifert genus $4j$, signature of absolute value $8j$ at $-1$, and determinant $8j+1\equiv1\pmod8$. Hence $\Arf(J)=0$, and the signature lower bound gives $\galg(J)=g_3(J)=4j$. Corollary \ref{arf-cables} gives a generating sphere with cyclic complement and $8j+1$ cocore intersections. Choose the same prime-power root $\xi$ with $\xi^n=-1$ as in the proof of Theorem \ref{realization}. Litherland's satellite formula \cite[Theorem 2]{Litherland} gives $\lvert\sigma_K(\xi)\rvert=\lvert\sigma_J(-1)\rvert=8j$. The prime-power admissibility result \cite[Proposition 2.17]{CNT} and the topological Murasugi-Tristram inequality \cite[Corollary 3.15]{CNT} therefore give $\gfour(K)\ge 4j$. Proposition \ref{profile-structure} gives
\[
    2\gfour(K)+1 \le\shn_n(K)\le8j+1,
\]
which proves both equalities.

For completeness, we recall the smooth obstruction used in \cite[Corollary 2.3]{FMNOPR}. Suppose that a smooth sphere $S\subset X_n(K)$ represents a generator of second homology. Its normal disk bundle has Euler number $n$ and boundary $L(n,1)$, oriented as $n$-surgery on the unknot. The inclusion of this disk bundle into $X_n(K)$ is an integral homology equivalence. As in \cite[Lemma 3.1]{FMNOPR}, excision and Poincar\'e-Lefschetz duality show that
\[
    W=X_n(K)\setminus\operatorname{int}\nu S
\]
is a smooth integral homology cobordism between $S^3_n(K)$ and $L(n,1)$.

For $n\ge2$, Gordon's cabling surgery formula \cite[Corollary 7.3]{Gordon}, applied with $p=1$ and $q=n$ in his convention that $q$ is the longitudinal winding number, gives the orientation-preserving identification
\[
    S^3_n(K)\cong L(n,1)\mathbin{\#}Y,
    \qquad Y=S^3_{1/n}(J).
\]
For $n=1$, the same identity follows from $C_{1,1}(J)=J$ and $L(1,1)=S^3$. The integral homology sphere $Y$ has a unique spin$^c$ structure. Restriction of spin$^c$ structures over $W$ gives a bijection between the spin$^c$ structures on its two boundary components. The corresponding Heegaard Floer correction terms agree by their invariance under smooth rational homology cobordism \cite[Proposition 9.9]{OS03}. Additivity under connected sum \cite[Theorem 4.3]{OS03} and summation over the $n$ spin$^c$ structures consequently give
\[
    \sum_{\mathfrak s\in\operatorname{Spin}^c(L(n,1))} d(L(n,1),\mathfrak s)+n\,d(Y)
    =
    \sum_{\mathfrak s\in\operatorname{Spin}^c(L(n,1))} d(L(n,1),\mathfrak s).
\]
Thus $d(Y)=0$.

On the other hand, let $V_0(J)$ denote the nonnegative integer knot Floer invariant in Ni and Wu's surgery formula. Their rational surgery formula \cite[Proposition 1.6]{NiWu} gives $d(Y)=-2V_0(J)$. The standard staircase model for $CFK^\infty(T_{2,2g+1})$ has Maslov-grading-zero generators $x_\ell$, for $0\le\ell\le g$, with filtration coordinates $(\ell,g-\ell)$, each representing the generator of $HF^\infty(S^3)$ in grading zero. This model follows from the filtered-complex calculation in \cite[Section 3]{OS05}. Multiplication by $U$ lowers both filtration coordinates by one. For this staircase, $V_0$ is therefore the minimum, over these generators, of the larger filtration coordinate:
\[
    V_0(T_{2,2g+1})
    =\min_{\ell\in\{0,\ldots,g\}}\max\{\ell,g-\ell\}
    =\left\lceil\frac g2\right\rceil.
\]
Taking $g=4j$ gives $d(Y)=-4j\neq0$, a contradiction. Reflection proves the negative-framing statement.
\end{proof}

\subsection{Alexander polynomials of breadth at most two}
We finish with a restriction that does not depend on the certificate construction. The breadth of a Laurent polynomial is the difference between its largest and smallest exponents.

\begin{proposition}\label{low-degree}
    Let $|n|\ge 2$. If $K$ is $\Z/n\Z$-shake slice and the breadth of $\Delta_K$ is at most two, then $\Delta_K=1$ after the normalization $\Delta_K(t)=\Delta_K(t^{-1})$ and $\Delta_K(1)=1$.
\end{proposition}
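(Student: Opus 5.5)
The approach is to extract from a $\Z/n\Z$-shake-slice sphere an algebraic constraint on $\Delta_K$ at $|n|$-th roots of unity, and then check that a breadth-two symmetric polynomial cannot satisfy it unless it is trivial. By Proposition \ref{profile-structure}(d) we may assume $n\ge 2$. Since $\Arf(K)=0$ is forced by Proposition \ref{profile-structure}(c), write
\[
    \Delta_K(t)=at-(2a-1)+at^{-1}=1+a(t-1)^2t^{-1},\qquad a\in\Z .
\]
Then $\Delta_K(-1)=1-4a\equiv\pm1\pmod 8$, and since $1-4a\equiv 1\pmod 4$ this forces $1-4a\equiv 1\pmod 8$, so $a$ is even.

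\textbf{Topological step.} Let $S\subset X_n(K)$ be a generating sphere with $\pi_1(W)\cong\Z/n\Z$, where $W=X_n(K)\setminus\operatorname{int}\nu S$. As in the smooth argument of Proposition \ref{top-smooth}, $W$ is an integral homology cobordism between $S^3_n(K)$ and $L(n,1)$, and the inclusions of both ends induce isomorphisms on $\pi_1$ modulo the meridian relation. I would pass to the cover $\widetilde W$ with group $\Z/p\Z$ for a prime $p\mid n$, or to the universal cover. The aim is to compare the $\Z[\Z/n\Z]$-twisted homology of the two ends, showing that the twisted first homology of $S^3_n(K)$ must vanish, as it does for $L(n,1)$, whose universal cover is $S^3$. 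For surgery, this twisted module is presented by $\Delta_K$ (from the exterior of $K$, whose $\Z$-cover has Alexander module $\Z[t^{\pm1}]/(\Delta_K)$ for breadth two), which would give:
\begin{center}
$\Delta_K$ is a unit in $\Z[\Z/n\Z]$, so $\Delta_K(\xi)$ is a unit in $\Z[\xi]$ for every $\xi$ with $\xi^n=1$.
\end{center}
This step is the main obstacle. Duality in $\widetilde W$ with $\Z[\Z/n\Z]$ coefficients must be set up carefully, including torsion versus free parts and the vanishing of $H_1(\widetilde W)$ from simple connectivity. If the full group-ring statement is hard, I would aim only for the weaker conclusion that $\Delta_K(\xi)$ is a unit in $\Z[\xi]$ for $\xi$ of prime order $p\mid n$. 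I would try to obtain it by comparing orders of $H_1$ of the $p$-fold covers of the two ends: the $p$-fold cover of $L(n,1)$ is $L(n/p,1)$, and the $p$-fold cover of $S^3_n(K)$ should contribute an extra factor $\prod_{\xi^p=1,\ \xi\neq1}|\Delta_K(\xi)|$. Mod-$p$ homology alone is useless here, since that product is automatically $\equiv1\pmod p$.

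\textbf{Number theory.} If $n$ is even, take $\xi=-1$. Then $1-4a=\pm1$ with $a$ integral forces $a=0$.

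If $n$ is odd, choose an odd prime $p\mid n$ and a primitive $p$-th root $\xi$, and set $\eta=2-\xi-\xi^{-1}$. Then $\Delta_K(\xi)=1-a\eta$ is a unit in the real subfield of $\Q(\xi)$, of degree $(p-1)/2$. The trace of $\eta$ over $\Q$ is $(p-1)+1=p$. Expanding the norm gives
\[
    \operatorname{N}(1-a\eta)\equiv 1-ap\pmod{a^2}.
\]
If the norm is $1$, then $a^2\mid ap$, so $a\mid p$; since $a$ is even and $p$ is odd, this forces $a=0$. If the norm is $-1$, then $a^2\mid 2-ap$. Since $a$ is even, $4\mid a^2$, but $2-ap\equiv2\pmod4$, a contradiction unless $a=0$.

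In all cases $a=0$, so $\Delta_K=1$.
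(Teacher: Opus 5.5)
Your route differs from the paper's and can be made to work, but the final arithmetic step has a genuine gap as written.

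\textbf{The gap.} In the odd case with $\operatorname{N}(1-a\eta)=-1$ you assert $2-ap\equiv 2\pmod 4$ for even $a$. This holds only when $4\mid a$. If $a\equiv 2\pmod 4$, then $ap\equiv 2\pmod 4$ and $2-ap\equiv 0\pmod 4$. In fact $a=\pm2$ satisfy $a^2=4\mid 2\mp 2p$ for every odd prime $p$. So your congruence modulo $a^2$ does not exclude $a=\pm 2$; note that $a=2$ is the Alexander polynomial of $5_2$.

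The missing ingredient is a congruence modulo $p$. The element $\eta=(1-\xi)(1-\xi^{-1})$ lies in the unique prime $\mathfrak p$ of $\Z[\xi+\xi^{-1}]$ above $p$. This prime is totally ramified, so it is Galois-stable with residue field $\mathbb F_p$. Hence every conjugate of $1-a\eta$ is $\equiv 1\pmod{\mathfrak p}$, and so $\operatorname{N}(1-a\eta)\equiv 1\pmod p$. Equivalently, every $e_i$ in your expansion is divisible by $p$. Since $p$ is odd, the norm cannot be $-1$. Your norm-$1$ argument ($a\mid p$ with $a$ even) then forces $a=0$.

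\textbf{The topological step.} The step you flag as the main obstacle is exactly what the paper imports from Feller--Miller--Nagel--Orson--Powell--Ray (Propositions 3.3 and 3.5): $\prod_{\zeta^n=1}|\Delta_K(\zeta)|=1$. Grouping the factors into Galois orbits shows that each $\Delta_K(\xi)$ has norm $\pm1$, which is all you use. Your universal-cover plan also proves it directly:
\begin{enumerate}[label=(\roman*)]
\item $\widetilde W$ is simply connected with $\chi(\widetilde W)=n\,\chi(W)=0$.
\item Lefschetz duality then gives $b_3=1$, $b_2=0$, and $H_2(\widetilde W)$ torsion-free, hence zero.
\item The exact sequence of the pair kills $H_1$ of the $n$-fold cyclic cover of $S^3_n(K)$. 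Since the lifted longitude is null-homologous, this group is $H_1$ of the $n$-fold cyclic branched cover of $K$.
\end{enumerate}
The Alexander module need not be cyclic in breadth two, but only its order enters. The $\Z/p$-cover fallback is unnecessary.

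\textbf{Comparison with the paper.} Once repaired, your argument is a genuinely different route. The paper uses the vanishing of $\sigma_K$ at nontrivial $n$th roots to make $\Delta_K$ positive there. It rules out $a\ge1$ at a root with argument in $[2\pi/3,\pi]$, and rules out $a\le-1$ by the product condition. You instead replace the signature input by $\Arf(K)=0$ and use arithmetic at a single prime-order root.

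Some input beyond the unit condition is needed either way. For the trefoil ($a=1$), $\Delta(\xi)$ is a unit at primitive fifth roots, since the five-fold branched cover of the trefoil is the Poincar\'e homology sphere. The paper excludes this case by signature; you exclude it by the parity of $a$.
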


\begin{proof}
Set $N=|n|$. Applying the necessary conditions for cyclic shake sliceness in \cite[Propositions 3.3 and 3.5]{FMNOPR} to the sphere-exterior cobordism of \cite[Lemma 3.1]{FMNOPR} gives
\[
    \prod_{\zeta^N=1}|\Delta_K(\zeta)|=1, \qquad \sigma_K(\zeta)=0\quad\text{whenever }\zeta^N=1\text{ and }\zeta\neq 1.
\]
For negative $n$, these statements follow by applying the positive case to the mirror knot. In particular, no $N$th root of unity is a root of $\Delta_K$.

These conditions imply that $\Delta_K(\zeta)>0$ at every nontrivial $N$th root of unity.
Let $V$ be a $2G\times2G$ Seifert matrix for $K$. For $\zeta=e^{i\theta}\neq 1$, write
\[
    H_K(\zeta)=(1-\zeta)V+(1-\bar\zeta)V^T.
\]
The chosen normalization of the Alexander polynomial gives
\[
    \det H_K(e^{i\theta}) =(-1)^G\bigl(4\sin^2(\theta/2)\bigr)^G \Delta_K(e^{i\theta}).
\]
At a nontrivial $N$th root, this Hermitian matrix is nonsingular and has signature zero. It therefore has exactly $G$ negative eigenvalues, and the displayed identity implies $\Delta_K(\zeta)>0$.

The breadth assumption allows us to write
\[
    \Delta_K(t)=a(t+t^{-1})+1-2a,\qquad a\in\Z.
\]
Suppose first that $a\ge 1$. Choose a prime $p$ dividing $N$ and set
\[
    \zeta=\exp\left(\frac{2\pi i\lfloor p/2\rfloor}{p}\right).
\]
Its argument lies in $[2\pi/3,\pi]$, so
\[
    \Delta_K(\zeta) =1-2a+2a\cos\left(\frac{2\pi\lfloor p/2\rfloor}{p}\right) \le1-3a<0.
\]
This contradicts positivity at $N$th roots.

If $a\le-1$, then
\[
    \Delta_K(e^{i\theta})=1+2|a|(1-\cos\theta)>1 \qquad\text{for }e^{i\theta}\neq 1.
\]
Since $N\ge 2$, the product of the absolute values over all $N$th roots is then strictly greater than $1$, another contradiction. Hence $a=0$, as required.
\end{proof}

\section{Signature bounds and profiles without genus reduction}\label{sec:signatures}

For a connected Seifert surface with matrix $V$, write $\sigma_L(z)$ and $\eta_L(z)$ for the signature and nullity of
\[
    H_V(z)=(1-z)V+(1-\bar z)V^T,\qquad z\in S^1\setminus\{1\}.
\]
We first compute these invariants for a shaking. This calculation applies to arbitrary knots, without any shake-sliceness hypothesis.

\begin{proposition}\label{signature-nullity}
    For $n\ge 1$, $r\ge 0$, and $z\in S^1\setminus\{1\}$,
    \begin{align*}
        \sigma_{S_{2r+1,n}(K)}(z)&=\sigma_K(z),\\
        \eta_{S_{2r+1,n}(K)}(z)&=\eta_K(z)+2r\,\mathbf 1_{\{z^n=1\}}.
    \end{align*}
\end{proposition}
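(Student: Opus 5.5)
Since $\theta=V\oplus M_n^{\oplus r}$ is a Seifert matrix for $S_{2r+1,n}(K)$ coming from a connected Seifert surface (the convention recalled in Section \ref{sec:algebra}, via \cite[Lemma 8.9]{FMNOPR}), the Hermitian matrix splits as
\[
    H_\theta(z)=H_V(z)\oplus H_{M_n}(z)^{\oplus r}.
\]
Signature and nullity are additive over orthogonal direct sums. It therefore suffices to show that for each $z\in S^1\setminus\{1\}$ the $2n\times 2n$ block $H_{M_n}(z)$ has signature $0$, and nullity $2$ if $z^n=1$ and $0$ otherwise.

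Write $w=1-z$, so $1-\bar z=\bar w$, and $w\neq 0$. Since $M_n=\begin{psmallmatrix}0&I\\ Z_n&0\end{psmallmatrix}$, the block is
\[
    H_{M_n}(z)=\begin{pmatrix}0&wI_n+\bar wZ_n^T\\ wZ_n+\bar wI_n&0\end{pmatrix}
    =\begin{pmatrix}0&Y^*\\ Y&0\end{pmatrix},\qquad Y=wZ_n+\bar wI_n .
\]
A Hermitian matrix of this off-diagonal form has eigenvalues $\pm$ the singular values of $Y$, together with zeros. Concretely, $\begin{psmallmatrix}x\\ y\end{psmallmatrix}\mapsto\begin{psmallmatrix}x\\ -y\end{psmallmatrix}$ conjugates it to its negative. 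Hence its signature is $0$ and its nullity is $2\dim\ker Y$.

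$Z_n$ is a cyclic permutation matrix with eigenvalues the $n$th roots of unity $\lambda$, so $Y$ has eigenvalues $w\lambda+\bar w$. These vanish exactly when $\lambda=-\bar w/w$. Since $-\bar w/w=-(1-\bar z)/(1-z)=\bar z$, one computes $-(1-\bar z)=-(z-1)/z=(1-z)/z$. Thus $\lambda=\bar z$, which is an $n$th root of unity iff $z^n=1$. Because $Z_n$ is diagonalizable with simple eigenvalues, $\dim\ker Y=1$ when $z^n=1$ and $0$ otherwise. This gives nullity $2\cdot\mathbf 1_{\{z^n=1\}}$ per summand, hence the total extra nullity $2r\,\mathbf 1_{\{z^n=1\}}$.

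The only delicate point is bookkeeping: confirming that $\theta$ comes from a connected Seifert surface, so that $\sigma_L,\eta_L$ are computed from it, and checking the index convention for $Z_n$ versus $Z_n^T$. The latter does not matter, since either choice yields the same spectrum set. For $n=1$, $Z_1=1$ and $Y=w+\bar w=2\operatorname{Re}(1-z)$, which vanishes iff $z=1$. This is consistent with the case $z^1=1$ being excluded, and the computation above already covers it.
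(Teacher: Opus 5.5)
Your proof is correct and follows the paper's route. You split $H_\theta(z)$ along $V\oplus M_n^{\oplus r}$ and observe that each $M_n$-block is off-diagonal Hermitian, so it has signature zero and nullity $2\dim\ker Y$. You then use the simple spectrum of the cyclic permutation matrix to see that the kernel is nonzero exactly when $\lambda=\bar z$. The paper writes this last step as the factorization $(1-z)I_n+(1-\bar z)Z_n^T=(1-z)(I_n-\bar z Z_n^T)$, which is your eigenvalue computation for $Y^*$.
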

\begin{proof}
The Seifert matrix is $V\oplus M_n^{\oplus r}$, as in Section \ref{sec:algebra}. The Hermitian matrix associated to each $M_n$ has the form
\[
    \begin{pmatrix}
        0&B\\
        B^*&0
    \end{pmatrix},
    \qquad
    B=(1-z)I_n+(1-\bar z)Z_n^T.
\]
Its nonzero eigenvalues occur in opposite pairs, and its nullity is $2\dim\ker B$. Since $Z_n^T$ has the $n$-th roots of unity as distinct eigenvalues and
\[
    B=(1-z)(I_n-\bar z Z_n^T),
\]
its kernel has dimension one exactly when $z^n=1$, and dimension zero otherwise. Direct sums give the result.
\end{proof}

\begin{proof}[Proof of Theorem \ref{main-signature}]
By mirroring, it suffices to prove the inequality for $n>0$. Let $F\subset B^4$ be the connected genus-$g$ surface bounded by $S_{2r+1,n}(K)$ supplied by Lemma \ref{shaking}. Then
\[
    \beta_1(F)=2g+2r.
\]
Every nontrivial root of unity of prime-power order satisfies the admissibility condition of \cite[Proposition 2.17]{CNT}. The topological Murasugi-Tristram inequality \cite[Corollary 3.15]{CNT}, applied in the one-colored case to the connected, embedded surface $F$, therefore gives
\[
    |\sigma_{\partial F}(\xi)|+\eta_{\partial F}(\xi)\le\beta_1(F)
\]
for every nontrivial root of unity $\xi$ of prime-power order. The component count in that inequality is the number of connected components of $F$, which is one, rather than its number $2r+1$ of boundary components. If $\xi^n=1$, Proposition \ref{signature-nullity} therefore yields
\[
    |\sigma_K(\xi)|+\eta_K(\xi)+2r\le2g+2r.
\]
This proves the claimed lower bound. At the prime-power roots under consideration, the knot nullity is in fact zero. Indeed, if $\xi$ has order $p^a$ and $\Delta_K(\xi)=0$, then the cyclotomic polynomial $\Phi_{p^a}(t)$ divides a polynomial representative of $\Delta_K(t)$. Evaluating at $t=1$ would force $p=\Phi_{p^a}(1)$ to divide $\Delta_K(1)=1$, a contradiction.

For the vertical profiles, use the matrix $V_0$ from Lemma \ref{five-two}. If $z=e^{i\theta}$ and $a=2-2\cos\theta$, then
\[
    H_{V_0}(z)=
    \begin{pmatrix}
        -a&1-z\\
        1-\bar z&-2a
    \end{pmatrix},\qquad
    \det H_{V_0}(z)=a(2a-1),\qquad
    \operatorname{tr}H_{V_0}(z)=-3a.
\]
Choose any prime $p$ dividing $|n|$ and put
\[
    \xi=\exp\left(\frac{2\pi i\lfloor p/2\rfloor}{p}\right).
\]
Its argument lies in $[2\pi/3,\pi]$, so $a\ge 3$ and $H_{V_0}(\xi)$ is negative definite. Hence
\[
\sigma_{\#^j5_2}(\xi)=-2j.
\]
The bound already proved gives $g\ge j$ for every pair in the profile. A genus-$j$ Seifert surface capped by the handle core realizes $(j,0)$. Proposition \ref{profile-structure}(a) then realizes every pair with $g\ge j$. This also treats $j=0$.

When $n$ is even, take $\xi=-1$ and use
\[
    |\sigma_{T_{2,2j+1}}(-1)|=2j=g_3(T_{2,2j+1})\cdot2.
\]
The same lower-bound and realization argument gives the stated torus-knot profiles.
\end{proof}

\begin{remark}
The algebraic genus of $5_2$ is one: it is at most its Seifert genus and at least half the absolute ordinary signature. Since $\Arf(5_2)=0$, the framing-one construction realizes $(0,1)$. Theorem \ref{main-signature} shows that $(0,r)$ is absent for every $r$ and every $|n|\ge 2$. Thus the literal framing-one genus-intersection trade-off fails for all these framings.
\end{remark}

\section{Scope of the certificates and further questions}\label{sec:scope}

\subsection{An Alexander-polynomial restriction}
The determinant criterion imposes the following restriction on the normalized Alexander polynomial, which is stronger than nonvanishing at the $n$-th roots of unity.

\begin{proposition}\label{certificate-restriction}
    Suppose the criterion of Corollary \ref{geometric-certificate} holds at framing $n\ge 1$. Then
    \[
        \Delta_K(t)\equiv1\pmod{t^n-1}
    \]
    in $\Z[t,t^{-1}]$.
\end{proposition}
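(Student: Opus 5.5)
Assume a certificate $\{u_l,v_l\}_{l=1}^k$ for $(V,n)$ exists, so that $\det E(t)=t^g$ by the discussion after Theorem \ref{det-formula}. The plan is to reduce $E(t)$ modulo the ideal $(t^n-1)$ in $\Z[t,t^{-1}]$. In \eqref{eq:certificate-matrix} every correction term carries the factor $(t^n-1)$; the factor $t^{1-n}$ is a unit in $\Z[t,t^{-1}]$, so the entries of $(t^n-1)t^{1-n}v_lu_l^T$ also lie in this ideal. Hence, entrywise,
\[
    E(t)\equiv tV-V^T \pmod{t^n-1}.
\]

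Since the determinant is a polynomial in the entries, it respects congruences modulo an ideal. Taking determinants therefore gives
\[
    t^g=\det E(t)\equiv\det(tV-V^T)=t^g\Delta_K(t)\pmod{t^n-1},
\]
where the last equality is the normalization fixed in the introduction. Multiplying by the unit $t^{-g}$ then yields $\Delta_K(t)\equiv1\pmod{t^n-1}$.

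The only point needing care is that the criterion of Corollary \ref{geometric-certificate} refers to a certificate for some Seifert matrix $V$ of $K$. We must make sure the identity $\det(tV-V^T)=t^g\Delta_K(t)$ holds for that particular $V$. It does, since the introduction states it for any $2g\times2g$ Seifert matrix. We must also make sure $\det E=t^g$ exactly, rather than only up to a unit; this was established before Corollary \ref{geometric-certificate}.

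Nothing here is a real obstacle. If one prefers, the congruence can be checked by evaluating at every $n$th root of unity instead, but that only gives divisibility by the squarefree $t^n-1$ over $\Q$. The direct ideal argument works over $\Z$ and is cleaner.
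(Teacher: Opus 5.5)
Your proof is correct and is essentially the paper's argument: reduce $E(t)$ modulo $(t^n-1)$, then use $\det E(t)=t^g$ and $\det(tV-V^T)=t^g\Delta_K(t)$, and cancel the unit $t^g$. Your closing aside undersells the root-of-unity alternative, though. Since $t^n-1$ is monic, divisibility in $\Q[t,t^{-1}]$ already implies divisibility in $\Z[t,t^{-1}]$, so that route would also work.
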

\begin{proof}
Let $V$ be the Seifert matrix in the certificate, of size $2g$, and let $E(t)$ be the matrix in \eqref{eq:certificate-matrix}. The normalization established after Theorem \ref{det-formula} gives $\det E(t)=t^g$. The correction term in $E(t)$ vanishes in $\Z[t,t^{-1}]/(t^n-1)$, so
\[
    t^g\Delta_K(t)=\det(tV-V^T)\equiv\det E(t)=t^g\pmod{t^n-1}.
\]
Since $t$ is a unit in this quotient, multiplication by $t^{-g}$ proves the assertion.
\end{proof}

Equivalently, the image of $\Delta_K$ in the integral group ring $\Z[\Z/n\Z]$ is the identity element. In particular, it is a trivial unit and its class in the Whitehead group is zero.
We prove this restriction under the certificate hypothesis of Corollary \ref{geometric-certificate}.
The cables in Theorem \ref{main-cables} satisfy the congruence because
\[
    \Delta_{C_{n,1}(J)}(t)=\Delta_J(t^n)\equiv1\pmod{t^n-1}.
\]

\begin{question}
    For a fixed $n\ge2$, which $\Z/n\Z$-shake slice knots admit a certificate of the form in Corollary \ref{geometric-certificate}? Can a broader Seifert-form construction give bounds for the shaking number when the congruence in Proposition \ref{certificate-restriction} fails?
\end{question}

\subsection{The zero-framing case}
Our construction does not apply at framing zero. The known genus formula at this framing concerns generating surfaces with infinite cyclic complement.
Feller-Park-Powell \cite[Theorem 1.5]{FPP} proved that the minimum genus of a generating surface $\Sigma\subset X_0(K)$ satisfying
\[
    \pi_1(X_0(K)\setminus\Sigma)\cong\Z
\]
equals the $\Z$-genus of $K$, namely the minimum genus of a connected, oriented, locally flat surface in $B^4$ bounded by $K$ with infinite cyclic complement. The meridian generates the complement group under the displayed condition, by the homology calculation in Section \ref{sec:profiles}. The $\Z$-genus equals $\galg(K)$ by \cite[Corollary 1.5]{FL19}. Consequently the equality with $\galg$ applies to the cyclic-complement version of shake genus, not to ordinary shake genus.

The distinction is visible even for slice knots. If $K=J\#\bar J$ with $J$ a trefoil, then $K$ is slice and has a genus-zero generating surface in $X_0(K)$, whereas $\Delta_K\neq 1$ and hence $\galg(K)>0$. No unrestricted inequality $\gsh^0(K)\ge \galg(K)$ can therefore hold.

\begin{question}
    Does $\shn_0(K)\le3$ imply that $K$ is topologically slice?
\end{question}

Since $\shn_0(K)=1$ precisely when $K$ is topologically slice and every finite shaking number is odd, this asks whether $\shn_0(K)=3$ can occur. It is the first possible nontrivial case of the question whether every topologically $0$-shake slice knot is slice, recorded in \cite[Problem 1.57(a)]{BKR26}.

\subsection{Other profiles}
Theorems \ref{realization} and \ref{main-signature} give examples in which the minimum realized genus decreases at each step until reaching zero, or remains constant. Proposition \ref{profile-structure} shows that the sequence $r\mapsto\gamma_n(K;r)$ is nonincreasing and drops by at most one at each step. It remains to determine which sequences satisfying these restrictions can occur.

\begin{question}
    For a fixed nonzero framing, which nonincreasing sequences of nonnegative integers, with successive drops at most one, occur as $r\mapsto\gamma_n(K;r)$? In particular, can one prescribe several intervals on which the sequence is constant, separated by nonzero drops?
\end{question}

\begin{question}
    For which knots $J$ with vanishing Arf invariant is the upper bound
    \[
        \shn_n(C_{n,1}(J))\le2\galg(J)+1
    \]
    sharp for every $n\ge 1$? Can the shaking numbers of these cables vary with $n$ for a fixed companion $J$?
\end{question}

\paragraph{Tool Use} AI tools were used during the final proofreading and revision of this manuscript to assist with checking calculations, improving the English exposition, and reviewing the literature on the current status of the open questions discussed in Section \ref{sec:scope}.

\bibliographystyle{amsalpha}
\bibliography{references}
\end{document}